\documentclass[letter]{amsart}

\usepackage[margin = 1.4in]{geometry}

\newcommand{\bitem}{\begin{compactitem}}
	\newcommand{\eitem}{\end{compactitem}}
\newcommand{\ds}{\displaystyle}
\newcommand{\ov}[1]{\overline{#1}}
\newcommand{\orient}[1]{\overleftrightarrow #1}
\newcommand{\oE}{\orient{E}}
\newcommand{\oG}{\orient{G}}
\newcommand{\vG}{\vec{G}}
\newcommand{\mU}{\mathcal{U}}
\newcommand{\mV}{\mathcal{V}}

\usepackage{graphicx,paralist,comment} 
\usepackage{amsmath}
\usepackage{amssymb}
\usepackage{amsthm}
\usepackage{amsfonts}
\usepackage{mathtools}
\usepackage{hyperref}
\usepackage{textgreek}
\usepackage{setspace}
\usepackage{ragged2e}
\usepackage{bbold}
\usepackage{tikz}
\usepackage{tikz-cd}
\usetikzlibrary{decorations.markings, arrows.meta}
\usepackage[usestackEOL]{stackengine}
\usepackage{array}
\usepackage[tone]{tipa}
\usepackage{fancyhdr}
\usepackage{ltablex}
\usepackage{xcolor}
\definecolor{mblue}{RGB}{ 35, 149, 174 }

\makeatletter
\newcommand{\oset}[3][0ex]{%
	\mathrel{\mathop{#3}\limits^{
			\vbox to#1{\kern-2\ex@
				\hbox{$\scriptstyle#2$}\vss}}}}
\makeatother

\def\multiset#1#2{\ensuremath{\left(\kern-.3em\left(\genfrac{}{}{0pt}{}{#1}{#2}\right)\kern-.3em\right)}}
\tikzstyle{v}=[circle, draw, fill=black,
inner sep=0pt, minimum width=2pt]
\tikzstyle{empty} = [fill = none, draw=none, rectangle]
\makeatother
\newtheorem{theorem}{Theorem}[section]

\newtheorem{lemma}[theorem]{Lemma}
\newtheorem{corollary}[theorem]{Corollary}
\newtheorem{cor}[theorem]{Corollary}
\theoremstyle{definition}
\newtheorem{definition}[theorem]{Definition}
\newtheorem{remark}[theorem]{Remark}
\newtheorem{example}[theorem]{Example}
\newtheorem{fact}[theorem]{Fact}

\newcommand{\al}{\alpha}
\newcommand{\be}{\beta}
\newcommand{\ga}{\gamma}

\newcommand{\mA}{\mathcal{A}}
\newcommand{\mB}{\mathcal{B}}
\newcommand{\mG}{\mathcal{G}}

\newcommand{\bN}{\mathbb{N}}

\newcommand{\cyc}{\text{cyc}}
\newcommand{\coc}{\text{coc}}
\newcommand{\Cyc}{\text{Cyc}}
\newcommand{\Acy}{\text{Acy}}
\newcommand{\cc}{\text{cc}}

\newcommand{\Four}{\text{Four}}

\newcommand{\la}[1]{{\oset{\leftarrow}{#1}}}
\newcommand{\ra}[1]{{\oset{\rightarrow}{#1}}}
\newcommand{\ba}[1]{{\oset{\leftrightarrow}{#1}}}
\newcommand{\ua}[1]{{\oset{\circ}{#1}}}

\allowdisplaybreaks
\usepackage{blindtext}

\newcommand{\fig}[3]{\begin{figure}[h!]\begin{center}\includegraphics[#1]{#2.pdf}\end{center}\caption{#3}\label{fig:#2}\end{figure}}

\graphicspath{{./Figures/}{../Figures/}}

\begin{document}
	
\title{Subgraphs versus Orientations:\\ Infinite families of equidistributions}
\author{Oliver Bernardi and Jonathan J. Fang}
\date{\today}

\begin{abstract}
A classical enumerative result states that, given a graph $G$ and a vertex $u$, the number of connected subgraphs of $G$ is equal to the number of orientations of $G$ such that every vertex can reach $u$ by a directed path. 
We show that this result is an instance of a much broader set of enumerative identities between subgraphs and orientations corresponding to various connectivity constraints. Namely, given two sets of pairs of vertices $A=\{(u_i,v_i),~i\in[k]\}$ and $B=\{(u_i',v_i'),~i\in[\ell]\}$, we consider the orientations $\al$ of $G$ such that adding the elements of $A$ and $B$ as additional directed edges to $\al$ gives an orientation $\al'$ in which $v_i$ cannot reach $u_i$ for all $i\in[k]$, but $v_i'$ can reach $u_i'$ for all $i\in[\ell]$. We show that this set of orientations is equinumerous to a set of subgraphs satisfying the ``same" connectivity constraints defined in terms of $A$ and $B$.

We also extend our results to the enumeration of equivalence classes of orientations satisfying such connectivity constraints. Precisely, we consider the equivalence classes under cycle reversal, cocycle reversal, or cycle-cocycle reversal. We show that the equivalences classes are equinumerous to some sets of subgraphs defined by connectivity and acyclicity constraints.
\end{abstract}

\maketitle
	
\section{Introduction}
This article is concerned with a set of enumerative identities between subgraphs and orientations.
Consider a finite undirected graph $G=(V,E)$ (loops and multiple edges are allowed). By \emph{subgraph} of $G$ we mean a graph of the form $H=(V,F)$ for some subset of edges $F\subseteq E$. 
A well-known result states that, given a vertex $u$, the number of orientations of $G$ such that every vertex can reach $u$ by a directed path is equal to the number of connected subgraphs of $G$~\cite{Gioan:degree-sequences}. Another classical result is that, given two vertices $u$ and $v$, the number of orientations such that $v$ can reach $u$ is equal to the number of subgraphs such that $u$ and $v$ are connected~\cite{Tutte:GT}. And, of course, the total number of orientations, $2^{|E|}$, is equal to the number of subgraphs. What if these identities were all part of a big family? This is the main result we establish in the present article. 

Let us set the vocabulary needed to state our results. Consider a graph $G=(V,E)$ and two arbitrary sets of pairs of vertices $A,B\subseteq V^2$.
Given an orientation $\al$ of $G$, we consider the oriented graph $\vec G\equiv \vec G(A,B;\al)$ obtained from $\al$ by adding the pairs in $A$ and $B$ as additional directed edges. We say that the orientation $\al$ is \emph{$(A,B)$-valid} if
\bitem
\item[(a)] for all $(u,v)$ in $A$, the vertex $v$ cannot reach $u$ by a directed path of $\vec G$, and
\item[(b)] for all $(u,v)$ in $B$, the vertex $v$ can reach $u$ by a directed path of $\vec G$.
\eitem
Given a subgraph $H=(V,F)$ we consider the oriented graph $\vec G\equiv \vec G(A,B;F)$ obtained from $H$ by replacing every edge in $H$ by two directed edges (in opposite direction) and adding the pairs in $A$ and $B$ as additional directed edges. We say that the subgraph $H$ is \emph{$(A,B)$-valid} if $\vec G$ satisfies the conditions (a) and (b) above.

Our main result is that for every graph $G$ and sets $A$ and $B$, the $(A,B)$-valid orientations are equinumerous to the $(A,B)$-valid subgraphs. Let us examine some special cases for illustration. For $A=\{(u,v)\}$ and $B=\emptyset$, our result implies that the number of orientations of $G$ such that there is no directed path from $v$ to $u$ is equal to the number of subgraphs for which $u$ and $v$ are not connected by a path. For $A=\emptyset$ and $B=\{(u,v)\mid v\in V\}$, our result implies the classical result mentioned above: the orientations of $G$ such that every vertex can reach $u$ by a directed path are equinumerous to the connected subgraphs of $G$. Figure~\ref{fig:example-subgraphVorient} gives another illustration.

\fig{width=.9\linewidth}{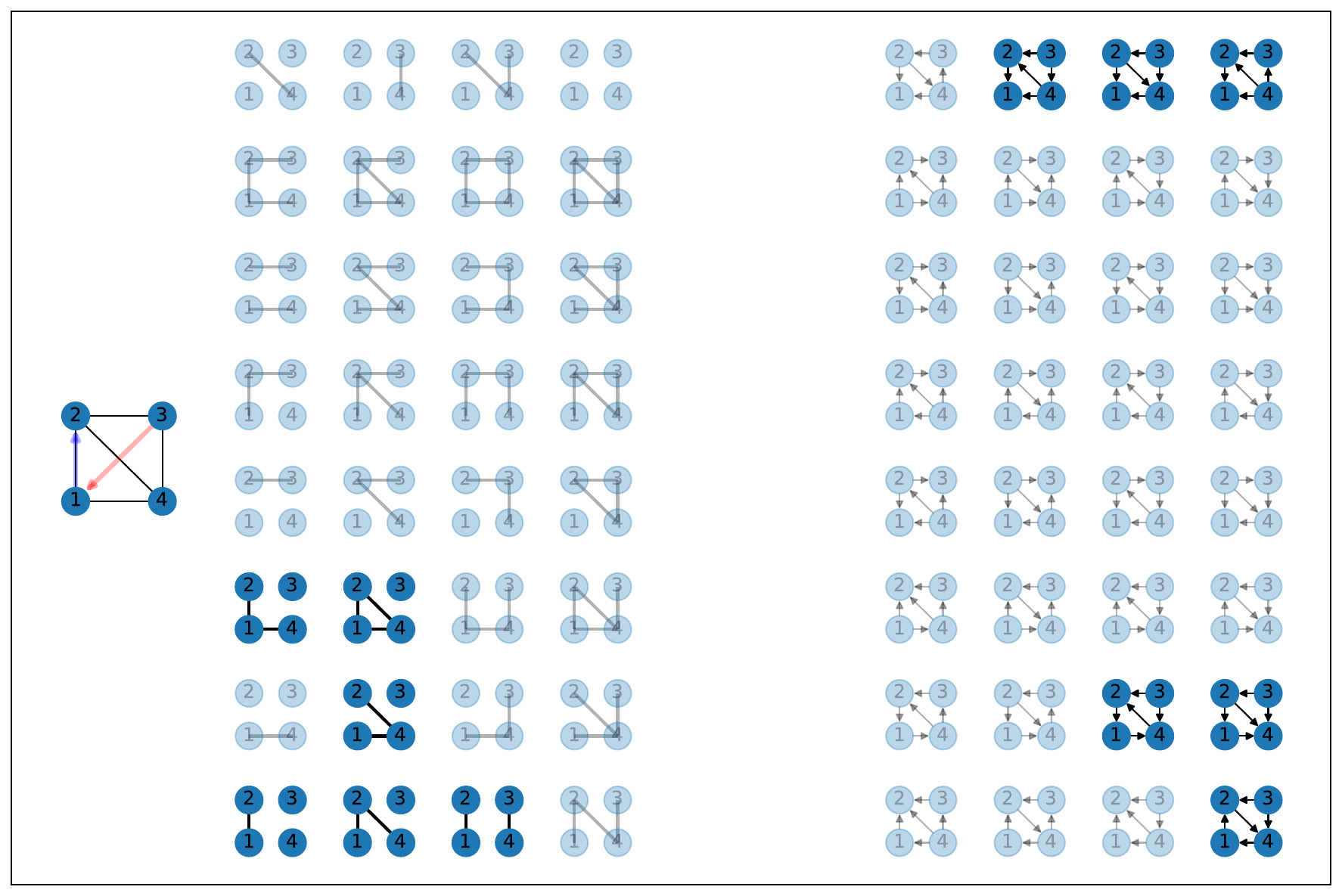}{The $(A,B)$-valid subgraphs and orientations (6 among 32). The graph $G$ has 5 edges on the vertex set $\{1,2,3,4\}$, and $A=\{(3,1)\}$ (red) and $B=\{(1,2)\}$ (blue).}

A good way to phrase the conditions (a) and (b) in a unified manner, and to interpolate between the case of orientations and the case of subgraphs is through the formalism of \emph{fourientations}~\cite{Backman:fourientation-evaluations,Backman:fourientation-activities} (a.k.a. \emph{biorientations}~\cite{OB-EF:dangulations}). A \emph{fourientation} of a graph $G=(V,E)$ is an assignment $\phi$ of one of four configurations for each edge $e\in E$ indicating how $e$ can be used on a path from a vertex to another: 
\bitem
\item either $e$ can be traversed in both directions; we say that $e$ is \emph{2-way} in this case, 
\item or $e$ can only be traversed in one specified direction; we say that $e$ is \emph{1-way} in this case, 
\item or $e$ cannot be traversed at all; we say that $e$ is \emph{0-way} in this case. 
\eitem
We say that an edge in a fourientation is \emph{solid} if it is either 0-way or 2-way. Clearly, orientations correspond to fourientations without solid edges, while subgraphs correspond to fourientations such that every edge is solid. We say that a fourientation $\phi$ of $G=(V,E)$ is \emph{$(A,B)$-valid} if the fourientation $\vec G$ obtained from $\phi$ by adding the 1-way edges in $A$ and $B$ satisfies the conditions (a), (b) above.
Our result above can then be generalized as follows: given any set of edges $S\subseteq E$, the number of $(A,B)$-valid fourientations having $S$ as their set of solid edges is independent of the set $S$. Indeed, comparing these cardinalities for $S=\emptyset$ and $S=E$ yields the previously stated identities between $(A,B)$-valid orientations and $(A,B)$-valid subgraphs.

Another set of results we establish is about equivalence classes of $(A,B)$-valid orientations (and fourientations). 
Following~\cite{Gioan:degree-sequences}, we say that two orientations of $G$ are \emph{cycle-reversal equivalent} if one can be obtained from the other by repeatedly flipping some directed cycles. A classical result is that the cycle-reversal equivalence classes of orientations of $G$ are equinumerous to the number of forests of $G$ (subgraphs without cycles).
We show that for any sets $A$, $B$ the cycle-reversal equivalence classes of $(A,B)$-valid orientations are equinumerous to the $(A,B)$-valid forests of $G$. 
Similar results are established for the cocycle-reversal equivalence, and the cycle-cocycle-reversal equivalence. In particular we give new proofs for the classical case $A=\emptyset$ and $B=\emptyset$ that cycle-reversal (resp. cocycle-reversal, cycle-cocycle) equivalence classes of orientations of $G$ are equinumerous to the number of forests (resp. connected subgraphs, spanning trees) of $G$.


\section{Notation}
In this section we set our notation and review some definitions about fourientations. 

We say \emph{graph} to mean finite undirected graph, where loops and multiple edges are allowed. We say \emph{digraph} to mean directed graph, and \emph{arc} to mean directed edge. We say that a vertex~$v$ in a digraph can \emph{reach} a vertex $u$ if there is a directed path from $v$ to $u$. We say that an arc $a=(u,v)$ in a digraph $G=(V,E)$ is \emph{cyclic} if it belongs to a directed cycle (i.e. $v$ can reach $u$) and \emph{acyclic} otherwise (i.e. $v$ cannot reach $u$). A non-empty set of arcs $S\subseteq A$ of the digraph $G$ is a \emph{directed cocycle} if there exists a partition $V=V_1\cup V_2$ such that $S$ is the set of arcs of $G$ having their initial vertex in $V_1$ and terminal vertex in $V_2$, and $G$ has no arc with initial vertex in $V_2$ and terminal vertex in $V_1$. It is well known 
that an arc is acyclic if and only if it belongs to a directed cocycle.

For a graph $G=(V,E)$, we let $\oG=(V,\orient{E})$ be the digraph obtained from $G$ by replacing each edge by two arcs in opposite direction. As explained above, a \emph{fourientation} of $G$ is an assignment of one among four possible configurations for each edge, either 0-way, 1-way (two possibilities) or 2-way. If we fix a conventional orientation $\ra{e}$ for an edge $e\in E$ of $G$, then we denote by $\ra{e}$ and $\la{e}$ the two 1-way configurations of $e$, by $\ua{e}$ the 0-way configuration, and by $\ba{e}$ the 2-way configuration.
Note that the fourientations of $G$ are naturally in correspondence with the digraphs of the form $D=(V,F)$ with $F\subseteq \orient{E}$. 
(By convention, even loops of $G$ have four possible configurations in fourientations, so there are $4^{|E|}$ fourientations of $G$.) 
 For a fourientation $\phi$, we denote by $\ra{\phi}$ the corresponding digraph $D=(V,F)$. By \emph{directed paths} (resp. \emph{directed cycles}, \emph{directed cocycles}) of a fourientation $\phi$, we mean directed paths (resp. directed cycles, directed cocycles) in the digraph $\ra{\phi}$.


Given a fourientation $\phi$ of a graph $G=(V,E)$ and two sets of pairs of vertices $A\subseteq V^2$ and $B\subseteq V^2$ (which can be thought as arcs on $V$), we denote by $\vG(A,B;\phi)$ the digraph obtained from $\ra{\phi}$ by adding the arcs in $A$ and $B$.
\begin{definition}
Given a graph $G=(V,E)$ and two sets of pairs of vertices $A, B\subseteq V^2$, we say that a fourientation $\phi$ of $G$ is $(A,B)$-valid if the digraph $\vec G=\vG(A,B;\phi)$ satisfies the conditions (a) and (b) stated in the introduction.
\end{definition}

\begin{remark}\label{rk:valid}
A fourientation $\phi$ of $G$ is $(A,B)$-valid if and only if every arc in $A$ is acyclic and every arc in $B$ is cyclic in $\vG(A,B;\phi)$.
\end{remark}

Recall that the \emph{solid edges} of a fourientation $\phi$ of $G=(V,E)$ are the edges which are either 0-way or 2-way. For a set $S\subseteq E$, we denote by $\Four_{G,S}(A,B)$ the set of $(A,B)$-valid fourientations with set of solid edges equal to $S$. Recall that the orientations of $G$ correspond to fourientations without solid edges, while the subgraphs of $G$ correspond to fourientations such that every edge is solid. 


\section{Identities for subgraphs, orientations and fourientations}
In this section we state, illustrate and prove our main result, which is a set of enumerative identities between classes of subgraphs, orientations, and fourientations.

\subsection{Main result}
Our main result is stated in terms of the cardinality of classes of fourientations with a given solid edge set $S$ (they interpolate between subgraphs and orientations). 
\begin{theorem}\label{thm:main}
	For any graph $G=(V,E)$ and sets $A,B\subseteq V^2$, the number $|\Four_{G,S}(A,B)|$ of $(A,B)$-valid fourientations with solid edge set $S$ is independent of $S$.
\end{theorem}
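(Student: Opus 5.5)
The plan is to reduce the theorem to a local statement about one edge. Because the sets $S\subseteq E$ are connected by toggling one edge at a time, it suffices to prove the following. Fix an edge $e$ with endpoints $x,y$ and conventional orientation $\ra{e}=(x,y)$, and fix the configurations of all other edges. Then, among the four configurations of $e$, the number giving an $(A,B)$-valid fourientation is the same for the two solid ones ($\ua{e}$, $\ba{e}$) as for the two 1-way ones ($\ra{e}$, $\la{e}$). Summing over the configurations of the other edges then shows $|\Four_{G,S}(A,B)|=|\Four_{G,S\cup\{e\}}(A,B)|$ whenever $e\notin S$.

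Let $D$ be the digraph $\vG(A,B;\phi)$ with the edge $e$ removed, where $\phi$ is the fixed partial fourientation. For $T\subseteq\{(x,y),(y,x)\}$, let $C_T$ be the set of arcs of $A\cup B$ that are cyclic in $D+T$. By Remark~\ref{rk:valid}, the configuration $T$ is valid if and only if $A\cap C_T=\emptyset$ and $B\subseteq C_T$. The key steps are the following.

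\begin{enumerate}
\item[(1)] I will show $C_{\{xy,yx\}}=C_{\{xy\}}\cup C_{\{yx\}}$. A simple directed path passes through $x$ at most once, so it cannot use both arcs $(x,y)$ and $(y,x)$.
\item[(2)] I will show $C_{\{xy\}}\cap C_{\{yx\}}=C_\emptyset$. Suppose $(u,v)$ is cyclic in both one-arc digraphs but not in $D$. Then in $D$ we have $v\rightsquigarrow x$ and $y\rightsquigarrow u$, and also $v\rightsquigarrow y$ and $x\rightsquigarrow u$. Hence $v\rightsquigarrow y\rightsquigarrow u$ in $D$, a contradiction.
\item[(3)] I will define $P=C_{\{xy\}}\setminus C_\emptyset$ and $Q=C_{\{yx\}}\setminus C_\emptyset$, and show that $P$ and $Q$ are not both nonempty. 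If $(u,v)\in P$, then $y\rightsquigarrow u$, the arc $(u,v)$ is in $D$, and $v\rightsquigarrow x$, so $y\rightsquigarrow x$ in $D$. Now take $(u',v')\in Q$, so that $v'\rightsquigarrow y$ and $x\rightsquigarrow u'$ in $D$. Combining these gives $v'\rightsquigarrow y\rightsquigarrow x\rightsquigarrow u'$ in $D$, contradicting $(u',v')\notin C_\emptyset$.
\end{enumerate}

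To conclude, assume without loss of generality that $Q=\emptyset$. Then $C_{\{yx\}}=C_\emptyset$, and by step (1) we get $C_{\{xy,yx\}}=C_{\{xy\}}$. So validity for $\la{e}$ coincides with validity for $\ua{e}$, and validity for $\ba{e}$ coincides with validity for $\ra{e}$. In particular the valid configurations pair off bijectively between solid and 1-way. Loops are covered automatically, since for a loop $P=Q=\emptyset$.

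The main obstacle is step (3). A naive inclusion–exclusion on $P$ and $Q$ fails: if both were nonempty and both met $A$, the counts would differ. The point that resolves this is that an arc in $P$ is itself an arc of $D$, which forces $y\rightsquigarrow x$ in $D$, and this rules out $Q\neq\emptyset$.
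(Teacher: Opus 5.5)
Your proof is correct, and it takes a different route from the paper's. Both arguments reduce to the local identity $f_\phi(\ra{e})+f_\phi(\la{e})=f_\phi(\ua{e})+f_\phi(\ba{e})$. The paper then proves this identity by case analysis on the values in $\{0,1,2\}$. It uses a monotonicity lemma along the poset $\ua{e}\prec\ra{e},\la{e}\prec\ba{e}$, and an ``upgrade'' lemma: a valid 1-way configuration $c$ yields a valid $\ba{e}$ if $c$ is cyclic and a valid $\ua{e}$ if it is acyclic. Two claims then match the events ``sum $=2$'' and ``sum $>0$'' on the two sides, each by a path- or cycle-splicing argument.

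You instead prove a structural statement about the set of cyclic constraint arcs: $C_{\{xy,yx\}}=C_{\{xy\}}\cup C_{\{yx\}}$, and one of $C_{\{xy\}}$, $C_{\{yx\}}$ equals $C_\emptyset$. The key observation is that a constraint arc made newly cyclic by $(x,y)$ is itself an arc of $D$ lying on a $y$-to-$x$ path. Hence $y\rightsquigarrow x$ in $D$, and $(y,x)$ is redundant.

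Your route buys some things the paper's does not give explicitly:
\begin{itemize}
\item A canonical pairing for each $\phi$. If $y\rightsquigarrow x$ in $D$, pair $\la{e}\leftrightarrow\ua{e}$ and $\ra{e}\leftrightarrow\ba{e}$; otherwise pair the mirror configurations. Composing over an ordering of the edges gives an explicit bijection between valid orientations and valid subgraphs.
\item A direct equidistribution result. The multisets $\{C_{\{xy\}},C_{\{yx\}}\}$ and $\{C_\emptyset,C_{\{xy,yx\}}\}$ coincide, so the whole set of cyclic arcs of $A\cup B$ is equidistributed over $\Four_{G,S}(A,B)$ as $S$ varies. The paper would get this only by applying the theorem to every partition of $A\cup B$.
\end{itemize}

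Two small remarks. First, step (2) is not needed, since it follows from step (3): if $Q=\emptyset$ then $C_{\{xy\}}\cap C_{\{yx\}}=C_\emptyset$ trivially. Second, in step (1) the correct justification is that a simple path visits each of $x$ and $y$ at most once. Visiting $x$ only once does not by itself exclude a segment $\cdots\to y\to x\to y\to\cdots$.
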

By comparing the case $S = \varnothing$ to the case $S= E$ in Theorem~\ref{thm:main}, we get:
\begin{corollary}\label{cor:main}
	Let $G = (V,E)$ be a graph. For any sets $A,B\subseteq V^2$ the number of $(A,B)$-valid orientations is equal to the number of $(A,B)$-valid subgraphs.
\end{corollary}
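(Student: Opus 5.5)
It suffices to show that for a single edge $e\in E$ and any $S\subseteq E\setminus\{e\}$ we have $|\Four_{G,S}(A,B)|=|\Four_{G,S\cup\{e\}}(A,B)|$; the theorem then follows by adding or removing edges one at a time. To compare the two sets, I will fix the configuration $\psi$ of all edges other than $e$ and count the valid choices for $e$. With $S$ as solid set, $e$ may be $\ra{e}$ or $\la{e}$; with $S\cup\{e\}$, it may be $\ua{e}$ or $\ba{e}$. Writing $e=\{x,y\}$ with conventional orientation $\ra{e}=(x,y)$, the goal is to show, for every $\psi$,
\[
\#\{c\in\{\ra{e},\la{e}\} : \psi\cup c \text{ valid}\}=\#\{c\in\{\ua{e},\ba{e}\}: \psi\cup c\text{ valid}\}.
\]

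The key tool is monotonicity. Write $D$ for the digraph $\vG(A,B;\psi)$ without $e$. Adding arcs only creates reachabilities, so condition (b) is monotone increasing in the arc set and condition (a) is monotone decreasing. The arc sets of the four configurations of $e$ form the Boolean lattice on the two arcs $(x,y)$ and $(y,x)$: $\ua{e}$ is the bottom, $\ba{e}$ the top, and $\ra{e},\la{e}$ are in between. For each of the two arcs, I will record whether adding it alone to $D$ changes the reachability relevant to $A$ or $B$. Using Remark~\ref{rk:valid}, every arc of $A$ must be acyclic and every arc of $B$ cyclic. Adding arc $(x,y)$ affects the pairs $(u,v)\in A\cup B$ for which $v$ reaches $x$ and $y$ reaches $u$.

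The case analysis then proceeds as follows. If $\ua{e}$ fails, it fails either (a) or (b). If it fails (a), everything above it fails too, so both sides count $0$. If it fails (b), then the validity of $\ra{e}$ or $\la{e}$ requires that the arc fixes every missing $B$-reachability; I then need to compare this with $\ba{e}$. The crucial observation I would prove is that in $D+\{(x,y),(y,x)\}$, any directed path uses at most one of the two arcs of $e$ essentially. So a reachability $v\to u$ in the top configuration is already achieved in $\ra{e}$ or in $\la{e}$, and, more strongly, all reachabilities of the top configuration hold in one of them unless paths requiring $(x,y)$ and paths requiring $(y,x)$ coexist. The same reasoning applies to the creation of forbidden $A$-cycles. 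I would then check that the map from configuration to validity is a function $f$ on the square satisfying $f(\ua{e})+f(\ba{e})=f(\ra{e})+f(\la{e})$, by enumerating which of the four corners can be valid, given the monotone constraints: (a) is a down-set and (b) is an up-set.

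I expect the main obstacle to be excluding the bad patterns: for instance only $\ra{e}$ valid, or both $\ra{e},\la{e}$ valid while $\ba{e}$ is invalid because combining the two arcs makes an $A$-arc cyclic. To handle the latter, I would argue that if both $(x,y)$ and $(y,x)$ are needed to close a cycle through an $A$-arc $(u,v)$, then $x$ and $y$ lie in a common strongly connected structure. In that case, the first use of $e$ in a cycle through $(u,v)$ can be shortcut, so a single arc already creates the cycle, which is a contradiction. A symmetric argument handles $B$. Once these exclusions are established, the per-$\psi$ counts agree, and summing over $\psi$ proves the theorem.
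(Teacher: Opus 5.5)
Your reduction to the per-$\psi$ identity $f(\ra{e})+f(\la{e})=f(\ua{e})+f(\ba{e})$ is the paper's. Your two tools, that (a)/(b) are down-/up-closed on the square and that a simple path uses at most one arc of $e$, do give the easy directions. The gap is that the substantive exclusions are never proved, and these two tools cannot prove them.

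Both tools hold verbatim for the ``naive'' variant in which the pairs of $A\cup B$ are \emph{not} added as arcs, and that variant is false (Remark following Corollary~\ref{cor:main}). Take $K_3$ with $B=\{(1,2),(2,3)\}$, $e=\{1,2\}$, $\{2,3\}$ 0-way and $\{1,3\}$ 2-way. Only $\ba{e}$ satisfies the naive conditions, so the left side is $0$ and the right side is $1$. This is exactly the situation you flag (``paths requiring $(x,y)$ and paths requiring $(y,x)$ coexist'') but never rule out.

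There are further omissions. Your case analysis starts from ``$\ua{e}$ fails'' and never treats the case where $\ua{e}$ is valid. The pattern ``only $\ra{e}$ valid'' is named but not argued. The one exclusion you do argue (both 1-way configurations valid, $\ba{e}$ not acyclic-valid) is immediate from the simple-path remark, and the $B$-analogue is not literally symmetric: it needs splicing two paths from different digraphs.

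The missing idea is to use that each constraint pair is itself an arc of $\vG$: validity means $A$-arcs acyclic and $B$-arcs cyclic (Remark~\ref{rk:valid}). This yields four facts.
\begin{enumerate}
\item[(1)] If $\phi\cup c$ is valid for $c\in\{\ra{e},\la{e}\}$, either the arc $c$ is cyclic in $D(c)$, or it is acyclic. If cyclic, adding its reverse leaves the strong components unchanged, so $\ba{e}$ is valid. If acyclic, no directed cycle through a $B$-arc uses it, so $\ua{e}$ is valid.
\item[(2)] Suppose $\ba{e}$ is valid but neither 1-way configuration is cyclic-valid. Then there are $b=(u,v)$ and $b'=(u',v')$ in $B$ whose paths need $(y,x)$ and $(x,y)$ respectively. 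The walk $y\to u'\to v'\to x$, through the arc $b'$, reroutes the $v\to u$ path off $e$, a contradiction.
\item[(3)] Suppose $\ua{e}$ is valid but neither 1-way configuration is acyclic-valid. Then the cycles through some $a,a'\in A$, using $(x,y)$ and $(y,x)$ respectively, splice into a cycle through $a$ and $a'$ avoiding $e$, a contradiction.
\item[(4)] If both 1-way configurations are valid, splicing the two $v\to u$ paths for each $B$-arc shows $\ua{e}$ is cyclic-valid.
\end{enumerate}
These are Lemma~\ref{lem:upgrade} and Claims~1--2 of the paper. Without them the local identity is not established.
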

Let us illustrate Theorem~\ref{thm:main} and Corollary~\ref{cor:main} by considering various connectivity constraints $A,B$.

\begin{example}\label{ex:rootcon}
Let $G = (V,E)$ be a graph, and let $u\in U\subseteq V$. 
By Corollary~\ref{cor:main} with $A = \lbrace (u,v)\mid v\in V\setminus U\rbrace$ and $B = \lbrace (u,v)\mid v\in U\rbrace$, the number of orientations such that for every vertex in $U$ can reach $u$ and every vertex in $V\setminus U$ cannot reach $u$ is equal to the number of subgraphs such that the connected component containing $u$ is $U$.
For $U=V$, we get that the number of orientations such that every vertex can reach $u$ is equal to the number of connected subgraphs. Applying Theorem~\ref{thm:main} to this case shows that the number of fourientations such that every vertex can reach $u$ is $2^{|E|}$ times the number of connected subgraphs.
\end{example}

\begin{example}\label{ex:gencon}
	Let $G = (V,E)$, and let $X,Y\subseteq V$ be such that $X\cup Y = V$. 
	Let $A = \varnothing$ and $B = \lbrace (x,y)\mid x\in X,~ y\in Y\rbrace$. Applying Corollary~\ref{cor:main} gives that the number of orientations of $G$ such that each vertex $y\in Y$ can reach at least one vertex in $X$ and every vertex $x\in X$ can be reached by at least one vertex in $Y$ is equal to the number of subgraphs of $G$ in which each connected component contains at least one vertex in $X$ and one vertex in $Y$. 
\end{example}

%

\begin{remark}
Given Corollary \ref{cor:main} one could wonder if a simpler result holds. Namely, given a graph $G=(V,E)$ and sets $A,B\subseteq V^2$, one could wonder if the following numbers are equal
\bitem
\item the number $o$ of orientations of $G$ such that for all $(u,v)\in A$, $v$ cannot reach $u$, while for all $(u,v)\in B$, $v$ can reach $u$,
\item the number $s$ of subgraphs of $G$ such that for all $(u,v)\in A$, $v$ cannot reach $u$, while for all $(u,v)\in B$, $v$ can reach $u$.
\eitem
This is not the case in general. For instance, for the triangle graph $G=K_3$ with vertex set $\{1,2,3\}$ and the sets $A=\emptyset$, $B=\{\{1,2\},\{2,3\}\}$ one finds $o=3$ and $s=4$. 
\end{remark}

\subsection{Enumerative consequences}
Let us reframe Theorem~\ref{thm:main} in terms of cyclic and acyclic edges. Let $G=(V,E)$ be a graph, and let $\oG=(V,\oE)$ be the corresponding digraph.
For an orientation $\al$ of $G$, we define $\Acy(\al)\subseteq \oE$ and $\Cyc(\al)\subseteq \oE$ as the sets of acyclic arcs and cyclic arcs, respectively. We extend this definitions to fourientations: for an fourientation $\phi$ of a graph $G$, we define $\Acy(\phi)\subseteq \oE$ and $\Cyc(\phi)\subseteq \oE$ as the sets of acyclic 1-way edges and cyclic 1-way edges, respectively.
The following is a consequence of Theorem~\ref{thm:main}.

\begin{cor}
Let $G=(V,E)$ be a graph, and let $\oG=(V,\oE)$ be the corresponding digraph. Let $(y_a)_{a\in \oE}$ and $(z_a)_{a\in \oE}$ be variables. Then 
\begin{equation}\label{eq:mainthm}
\sum_{\al \textrm{ orientation of }G~}\prod_{a\in \Acy(\al)}(1+y_a)\prod_{b\in \Cyc(\al)}(1+z_b)=\sum_{\phi \textrm{ fourientation of }G~}\prod_{a\in \Acy(\phi)}y_a\prod_{b\in \Cyc(\phi)}z_b.
\end{equation}
\end{cor}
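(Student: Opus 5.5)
The plan is to expand the left-hand side of \eqref{eq:mainthm}, compare coefficients of monomials on both sides, and recognise each coefficient identity as an instance of Theorem~\ref{thm:main} applied to a subgraph of $G$.

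First, expand each factor on the left. For an orientation $\al$, we have $\prod_{a\in\Acy(\al)}(1+y_a)\prod_{b\in\Cyc(\al)}(1+z_b)=\sum_{P}\prod_{a\in P\cap \Acy(\al)}y_a\prod_{b\in P\cap\Cyc(\al)}z_b$, where $P$ ranges over subsets of the arcs of $\al$. Hence both sides of \eqref{eq:mainthm} are polynomials whose monomials have the form $\prod_{a\in A}y_a\prod_{b\in B}z_b$. Here $A,B\subseteq\oE$ are disjoint, and $A\cup B$ contains at most one of the two arcs associated with each edge of $G$. Indeed, on the right each edge contributes at most one 1-way arc, and on the left the arcs of $\al$ contain exactly one arc per edge. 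It therefore suffices to fix such a pair $(A,B)$ and show that its coefficient is the same on both sides. Let $S\subseteq E$ be the set of edges having no arc in $A\cup B$.

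Second, identify the two coefficients.
\begin{itemize}
\item On the left, the coefficient is the number of orientations $\al$ of $G$ that contain every arc of $A\cup B$ and in which every arc of $A$ is acyclic and every arc of $B$ is cyclic. The arcs of $\al$ on $E\setminus S$ are forced, so only the orientation of the edges in $S$ is free.
\item On the right, the coefficient is the number of fourientations $\phi$ whose set of 1-way arcs is exactly $A\cup B$ (so every edge of $S$ is solid), and in which arcs of $A$ are acyclic and arcs of $B$ are cyclic in $\ra{\phi}$.
\end{itemize}

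Third, translate both counts to the graph $G_S=(V,S)$. View $A$ and $B$ as sets of pairs in $V^2$, with $a=(u,v)$ an arc from $u$ to $v$. For any fourientation $\psi$ of $G_S$, extending $\psi$ by the arcs of $A\cup B$ gives a fourientation $\phi$ of $G$. Its digraph $\ra{\phi}$ is exactly $\vG(A,B;\psi)$ as defined in Section~2. By Remark~\ref{rk:valid}, $\psi$ is $(A,B)$-valid for $G_S$ if and only if every arc of $A$ is acyclic and every arc of $B$ is cyclic in $\ra{\phi}$. Consequently:
\begin{itemize}
\item the left coefficient equals $|\Four_{G_S,\emptyset}(A,B)|$, since the orientations of $G_S$ are the fourientations with no solid edges;
\item the right coefficient equals $|\Four_{G_S,S}(A,B)|$, since the fourientations of $G_S$ with all edges solid are those whose 1-way arcs in $\phi$ are exactly $A\cup B$.
\end{itemize}
Theorem~\ref{thm:main}, applied to $G_S$, states that these two numbers are equal. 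This proves the coefficient identity, and hence \eqref{eq:mainthm}.

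The main point needing care is the bookkeeping in the third step. One must check that the arcs in $A\cup B$ play two roles in the same way on both sides: they are the constraint pairs of $G_S$, and they are genuine arcs of $\al$ or $\phi$ that participate in directed paths and cycles. One must also check that the restriction $\phi\mapsto\psi$ is a bijection onto the fourientations of $G_S$ with the prescribed solid set. Loops and multiple edges need no special treatment, since everything is phrased arc by arc.
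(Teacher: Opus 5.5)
Your proof is correct and is essentially the paper's argument: compare the coefficient of $\prod_{a\in A}y_a\prod_{b\in B}z_b$ on both sides, identify the two coefficients as the numbers of $(A,B)$-valid orientations and $(A,B)$-valid subgraphs of $G\setminus\! A\setminus\! B$ (your $G_S$), and conclude by Theorem~\ref{thm:main} and Remark~\ref{rk:valid}. You explicitly restrict to pairs $(A,B)$ that are disjoint and use at most one arc per edge, which is slightly more careful than the paper: its coefficient formulas are literally correct only for such pairs, and for all other pairs both coefficients are simply zero.
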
 
	
\begin{proof} For every subsets $A,B\subseteq \oE$ we can compare the coefficient of $\prod_{a\in A}y_a\prod_{b\in B}z_b$ in the left-hand side and right-hand side of~\eqref{eq:mainthm}. Denoting by $G\setminus\! A\setminus\! B$ the subgraph of $G$ obtained by deleting the edges in $E$ corresponding to the arcs in $A\cup B$, one gets
$$\bigg[\prod_{a\in A}y_a\prod_{b\in B}z_b\bigg]LHS=\# (A,B)\textrm{-valid orientations of }G\setminus\! A\setminus\! B,$$
and
$$\bigg[\prod_{a\in A}y_a\prod_{b\in B}z_b\bigg]RHS=\# (A,B)\textrm{-valid subgraphs of }G\setminus\! A\setminus\! B.$$
Since these quantities are equal for all sets $A,B$ by Theorem~\ref{eq:mainthm} (see Remark~\ref{rk:valid}), we conclude that~\eqref{eq:mainthm} holds.
\end{proof}	

A fourientation is called \emph{acyclic} (resp. \emph{totally cyclic}) if all its 1-way edges are acyclic (resp. cyclic). Note that by setting $y_a=1$ and $z_a=0$ for all $a\in \oE$ in~\eqref{eq:mainthm} one gets
\begin{equation} 
\# \textrm{ acyclic fourientations of }G=\sum_{\al\textrm{ orientation of }G}2^{|\Acy(\al)|},
\end{equation}
while by setting $y_a=0$ and $z_a=1$ for all $a\in \oE$ one gets
\begin{equation}\label{eq:tot-cyc}
\# \textrm{ totally cyclic fourientations of }G=\sum_{\al\textrm{ orientation of }G}2^{|\Cyc(\al)|}.
\end{equation}
From~\eqref{eq:tot-cyc}, one can derive the following result, first stated in~\cite{Archer:acyclic-strong-descents}, which follows implicitly from a recurrence obtained by Wright~\cite{Wright:strong-digraphs} and independently Robinson~\cite{Robinson:counting-digraphs}.
\begin{cor}[\cite{Archer:acyclic-strong-descents}] \label{cor:Ira}
For all $n\in \bN$, let $t_n$ be the number of totally cyclic orientations of the complete graph $K_n$ (strongly connected tournaments on $n$ vertices), and let $s_n$ be the number of strongly connected fourientations of $K_n$ (strongly connected simple digraphs on $n$ vertices). These numbers are related by the following equation:
\begin{equation}\label{eq:Ira}
\frac{1}{\ds 1-\sum_{n=1}^\infty2^{{n\choose 2}}t_n\frac{x^n}{n!}}=\exp\left(\sum_{n=1}^\infty s_n\frac{x^n}{n!}\right).
\end{equation}
\end{cor}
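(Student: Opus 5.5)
The plan is to derive \eqref{eq:Ira} from \eqref{eq:tot-cyc} applied to $K_n$ and then pass to exponential generating functions. A totally cyclic fourientation of $K_n$ is the same as a simple digraph on $[n]$ in which every arc lies in a directed cycle. Since an arc is cyclic exactly when its endpoints are in the same strong component, these are the digraphs whose strong components have no arcs between them. Equivalently, they are disjoint unions of strongly connected simple digraphs on the blocks of a set partition of $[n]$. The left-hand side of \eqref{eq:tot-cyc} therefore has exponential generating function $\exp\bigl(\sum_{n\ge1}s_n x^n/n!\bigr)$, with the convention that the empty graph contributes $1$. This gives the right-hand side of \eqref{eq:Ira}.

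For the right-hand side of \eqref{eq:tot-cyc} with $G=K_n$, I would use the standard structure of a tournament $\al$. Its strong components $C_1,\dots,C_k$ are linearly ordered, and every arc between $C_i$ and $C_j$ with $i<j$ goes from $C_i$ to $C_j$. So $\al$ is determined by an ordered set partition $(C_1,\dots,C_k)$ of $[n]$ together with a strongly connected tournament on each block. The cyclic arcs of $\al$ are exactly the arcs inside the blocks, so $|\Cyc(\al)|=\sum_i \binom{|C_i|}{2}$ and
\[
2^{|\Cyc(\al)|}=\prod_{i=1}^k 2^{\binom{|C_i|}{2}} .
\]
The weight is multiplicative over the ordered blocks. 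By the exponential formula for sequences of labelled structures, $\sum_{n\ge0}\frac{x^n}{n!}\sum_{\al}2^{|\Cyc(\al)|}$ equals $1/\bigl(1-T(x)\bigr)$ with $T(x)=\sum_{n\ge1}2^{\binom n2}t_n x^n/n!$.

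Equating the two generating functions via \eqref{eq:tot-cyc} for every $n$, including the trivial case $n=0$, yields \eqref{eq:Ira}. The only step that needs care is the tournament decomposition. One must check that the ordered partition into strong components is unique, so that no orientation is overcounted. One must also check that arcs between distinct components are acyclic, because a reverse path would merge the two components. Both facts are standard, and once they are in place the rest is bookkeeping with the exponential formula.
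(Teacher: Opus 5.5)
Your proposal is correct and follows essentially the same route as the paper. The left-hand side of \eqref{eq:Ira} is read as the exponential generating function of tournaments weighted by $2^{|\Cyc(\al)|}$, using the sequence decomposition into strong components. The right-hand side is read, via the exponential formula, as the generating function of totally cyclic fourientations, and \eqref{eq:tot-cyc} applied to $K_n$ matches the coefficients.
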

\begin{proof}
The main idea is that the the left-hand side of~\eqref{eq:Ira} can be interpreted as the generating function of oriented complete graphs with a weight of 2 per cyclic arc, while the right-hand side can be interpreted as the generating function of totally cyclic fouriented complete graphs. These are shown to be equal by applying~\eqref{eq:tot-cyc} to complete graphs.

Let us give some more details. First, consider the exponential generating function 
$$A(x)=\sum_{\al \in \mA} 2^{|\Cyc(\al)|}\frac{x^{|\al|}}{|\al|!},$$ 
where the sum is over the set $\mA$ of all the orientations of complete graphs, and for such an orientation $\al\in \mA$ we denote by $|\al|$ the number of vertices. For an orientation $\al\in \mA$ one can consider the set of strongly connected components $C_1,\ldots,C_r$. Each component $C_i$ can be identified with a totally cyclic orientation of a complete graph. Contracting these components gives an acyclic orientation of $K_r$, hence a total order on the components $C_1,\ldots,C_r$. Thus, the class $\mA$ can be identified with the class of finite sequences of totally cyclic orientations of complete graphs. This implies (by standard arguments~\cite{Flajolet:analytic}) the following generating function equation:
$$A(x)=\frac{1}{\ds 1-\sum_{n=1}^\infty2^{{n\choose 2}}t_n\frac{x^n}{n!}}.$$

Consider now the exponential generating function 
$$B(x)=\sum_{\be \in \mB} \frac{x^{|\be|}}{|\be|!},$$ 
where the sum is over the set $\mB$ of all the totally cyclic fourientations of complete graphs, and $|\be|$ denotes the number of vertices.
Any fourientation $\be\in\mB$ decomposes as a set of (strongly) connected components, which implies (by a standard argument, namely the exponential formula~\cite{Flajolet:analytic}) the following equation:
 $$B(x)=\exp\left(\sum_{n=1}^\infty s_n\frac{x^n}{n!}\right).$$

Finally, applying~\eqref{eq:tot-cyc} to the complete graph $K_n$ gives $[x^n]A(x)=[x^n]B(x)$. We thus obtain $A(x)=B(x)$ which completes the proof of~\eqref{eq:Ira}.
\end{proof}

\subsection{Proof of Theorem~\ref{thm:main}}
In this subsection we prove Theorem~\ref{thm:main}. We fix a graph $G=(V,E)$ and the sets $A,B\subseteq V^2$ throughout. 
It suffices to show that for every set $S\subset V$ and edge $e\in E\setminus S$, 
\begin{equation}\label{eq:suffice}
|\Four_{G,S}(A,B)| = |\Four_{G,S\cup e}(A,B)|.
\end{equation} 
We now fix $S$ and $e\in E\setminus S$, and proceed to prove~\eqref{eq:suffice}.

Let $\Omega = \lbrace \ua{e}, \ra{e}, \la{e}, \ba{e}\rbrace$ be the set of possible configurations of $e$, and let $(\Omega, \preceq)$ be the poset given by $\ua{e}\prec \ra{e}\prec\ba{e}$ and $\ua{e}\prec \la{e}\prec\ba{e}$. 
	For a fourientation $\phi$ of $G\setminus e$ 
	and a configuration $c\in \Omega$ we denote by $\phi\cup c$ the fourientation of $G$ obtained by giving configuration $c$ to $e$. We then define the function $f_\phi:\Omega\rightarrow \lbrace 0,1\rbrace,$ by: 
	$$f_\phi(c) = \begin{cases}
		1&\text{if }\phi\cup c\text{ is }(A,B)\text{-valid},\\
		0&\text{otherwise}.\\
	\end{cases}$$

Note that 	
$$|\Four_{G,S}(A,B)| =\sum_\phi f_\phi(\ra{e}) + f_\phi(\la{e})~\textrm{ and }~|\Four_{G,S\cup e}(A,B)|=\sum_\phi f_\phi(\ua{e}) + f_\phi(\ba{e}),$$ 
where the sums are over the fourientations $\phi$ of $G\setminus e$ with solid edge set $S$. 
Thus, in order to prove~\eqref{eq:suffice} it suffices to prove that for every fourientation $\phi$ of $G\setminus e$,
\begin{equation}\label{eq:suffice2}
f_\phi(\ra{e}) + f_\phi(\la{e}) = f_\phi(\ua{e}) + f_\phi(\ba{e}).
\end{equation}

We now fix an orientation $\phi$ of $G\setminus e$, and proceed to prove~\eqref{eq:suffice2}. For a configuration $c\in \Omega$, we denote by $D(c)$ the directed graph $\vG(A,B;\phi\cup c)$. 
Recall that a fourientation is $(A,B)$-valid if it satisfies Conditions (a) and (b). We say that $c\in\Omega$ is
\bitem
\item \textit{acyclic-valid} if $\phi\cup c$ satisfies (a), that is, every arc in $A$ is acyclic in $D(c)$,
\item \textit{cyclic-valid} if $\phi\cup c$ satisfies (b), that is, every arc in $B$ is cyclic in $D(c)$.
\eitem

We first establish two easy lemmas.

	\begin{lemma}\label{lem:order}
		Let $c\preceq d$ in the poset $(\Omega, \preceq)$. If $c$ is cyclic-valid, then $d$ is cyclic-valid. If $d$ is acyclic-valid, then $c$ is acyclic-valid.
	\end{lemma}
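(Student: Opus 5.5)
The plan is to reduce the lemma to a monotonicity statement about digraphs. The key observation is that $c\preceq d$ in $(\Omega,\preceq)$ means exactly that the set of arcs contributed by $e$ in configuration $c$ is contained in the set contributed in configuration $d$. Concretely, $\ua{e}$ contributes no arc, $\ra{e}$ and $\la{e}$ contribute one arc each, and $\ba{e}$ contributes both. Since $D(c)=\vG(A,B;\phi\cup c)$ is obtained from the fixed digraph $\vG(A,B;\phi)$ by adding the arcs of $e$ in configuration $c$, we get that $D(c)$ is a subdigraph of $D(d)$ on the same vertex set whenever $c\preceq d$. In particular, $D(c)$ and $D(d)$ contain the same arcs from $A\cup B$.

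Next, I would observe that reachability is monotone under adding arcs. Any directed path in $D(c)$ is also a directed path in $D(d)$. Hence, for an arc $(u,v)\in A\cup B$, if $v$ can reach $u$ in $D(c)$, then $v$ can reach $u$ in $D(d)$. By Remark~\ref{rk:valid}, this says that an arc of $A\cup B$ which is cyclic in $D(c)$ remains cyclic in $D(d)$. Taking the contrapositive, an arc of $A\cup B$ which is acyclic in $D(d)$ is acyclic in $D(c)$.

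Applying this to every arc of $B$ gives the first claim: if $c$ is cyclic-valid, then $d$ is cyclic-valid. Applying the contrapositive to every arc of $A$ gives the second claim: if $d$ is acyclic-valid, then $c$ is acyclic-valid.

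There is no real obstacle here. The only point needing care is verifying the containment of arc sets for each comparable pair in the poset, namely $\ua{e}\prec\ra{e}$, $\ua{e}\prec\la{e}$, $\ra{e}\prec\ba{e}$, $\la{e}\prec\ba{e}$, and $\ua{e}\prec\ba{e}$. This check is immediate from the definitions. It also holds when $e$ is a loop, since a loop arc never affects reachability between distinct vertices, and it can only make a loop arc in $A\cup B$ cyclic, which happens anyway.
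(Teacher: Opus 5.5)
Your proof is correct and is the same argument as the paper's: when $c\preceq d$, $D(c)$ is a subdigraph of $D(d)$, so every directed path (and hence every witness of cyclicity for an arc of $A\cup B$) in $D(c)$ survives in $D(d)$, and the acyclic statement follows by contraposition. Your explicit check of the comparable pairs and your remark about loops are harmless extra detail; the arc-containment observation already covers them.
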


\begin{proof}
The digraph 	$D(d)$ is obtained from $D(c)$ by adding some arcs. Hence any directed path in $D(c)$ is a directed path in $D(d)$. 
Hence if $c$ is is cyclic-valid, then $d$ is also cyclic-valid; and if $d$ is acyclic-valid, then $c$ is acyclic-valid.
\end{proof}

\begin{lemma}\label{lem:upgrade}
		Let $c\in\lbrace \la{e}, \ra{e}\rbrace$ such that $\phi \cup c$ is $(A,B)$-valid. If $c$ is cyclic in $D(c)$, then $\phi \cup{\ba{e}}$ is $(A,B)$-valid. If $c$ is acyclic in $D(c)$, then $\phi \cup{\ua{e}}$ is $(A,B)$-valid. 
\end{lemma}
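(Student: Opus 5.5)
Write $c=\ra{e}$ (the case $c=\la{e}$ is symmetric), and say $e$ joins $x$ to $y$ with $c$ the arc $(x,y)$. Since $\phi\cup c$ is $(A,B)$-valid, $c$ is both acyclic-valid and cyclic-valid. Lemma~\ref{lem:order} with $c\preceq \ba{e}$ shows that $\ba{e}$ is cyclic-valid, and with $\ua{e}\preceq c$ it shows that $\ua{e}$ is acyclic-valid. So two facts remain. In the first case we must show that $\ba{e}$ is acyclic-valid, and in the second that $\ua{e}$ is cyclic-valid.

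\emph{Case $c$ cyclic in $D(c)$.} Then $D(c)$ contains a directed path $P$ from $y$ to $x$. The graph $D(\ba{e})$ is $D(c)$ plus the reverse arc $(y,x)$. Any directed path in $D(\ba{e})$ that uses $(y,x)$ can have that arc replaced by the walk $P$, which lies in $D(c)$. Hence reachability in $D(\ba{e})$ is the same as in $D(c)$. An arc $(u,v)\in A$ is acyclic in $D(c)$, so $v$ cannot reach $u$ in $D(c)$, and therefore not in $D(\ba{e})$ either. Thus $\ba{e}$ is acyclic-valid, and $\phi\cup\ba{e}$ is $(A,B)$-valid.

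\emph{Case $c$ acyclic in $D(c)$.} Then $y$ cannot reach $x$ in $D(c)$. Take $(u,v)\in B$. It is cyclic in $D(c)$, so there is a directed path $Q$ from $v$ to $u$ in $D(c)$, and we may take $Q$ to be a simple path. We claim $Q$ can be chosen to avoid $c$. Suppose $Q$ uses $c$. Then $Q\cup\{(u,v)\}$ is a directed cycle of $D(c)$ through $c$. This makes $c$ cyclic, which contradicts the assumption. Note that $(u,v)$ is a genuine arc of $D(c)$, as the arcs of $B$ are added to $D(c)$, and it is distinct from $c$ because $c$ comes from the edge $e$ of $G$. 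So $Q$ lies in $D(\ua{e})=D(c)\setminus c$, every arc of $B$ is cyclic in $D(\ua{e})$, and $\phi\cup\ua{e}$ is $(A,B)$-valid.

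The only subtle point is the second case. We must make sure that the closed walk formed by $Q$ and the arc $(u,v)$ really gives a directed cycle containing $c$. Taking $Q$ simple and using that $c\neq(u,v)$ as arcs settles this. It also covers the degenerate case where $u=v$ or $e$ is a loop: a loop $c$ is automatically cyclic, so it never falls into this case.
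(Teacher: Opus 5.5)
Your proof is correct and is essentially the paper's argument. In the cyclic case, adding the reverse of $c$ does not change reachability; the paper phrases this as leaving the strongly connected components unchanged. In the acyclic case, a directed cycle through an arc of $B$ cannot pass through the acyclic arc $c$. In both cases Lemma~\ref{lem:order} supplies the other half of validity.
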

\begin{proof}
Suppose first that the arc $c$ is cyclic in $D(c)$. The digraph $D(\ba{e})$ is obtained from $D(c)$ by adding the arc opposite to $c$. Since $c$ is cyclic, this addition does not change the strongly connected components. Since $c$ is acyclic-valid, the arcs in $A$ have endpoints in different components of $D(c)$, hence $\ba{e}$ is acyclic-valid. Moreover $\ba{e}$ is cyclic-valid by Lemma~\ref{lem:order}, hence $\phi \cup{\ba{e}}$ is $(A,B)$-valid.
		
Suppose now that the arc $c$ is acyclic in $D(c)$. Then no directed cycle containing an arc $b\in B$ in $D(c)$ can also contain $c$. Thus $\ua{e}$ is cyclic-valid. Moreover $\ua{e}$ is acyclic-valid by Lemma~\ref{lem:order}, hence $\phi \cup \ua{e}$ is $(A,B)$-valid. 
\end{proof}

We will now prove~\eqref{eq:suffice2}, starting with the following:\smallskip

\noindent \textbf{Claim 1.} $f_\phi(\ba{e}) + f_\phi(\ua{e}) = 2$ if and only if $f_\phi(\ra{e}) + f_\phi(\la{e}) = 2$.
\smallskip

Suppose that $f_\phi(\ba{e}) + f_\phi(\ua{e}) = 2$. Since $\ba{e}$ is acyclic-valid, Lemma~\ref{lem:order} implies that $\ra{e}$ and $\la{e}$ are both acyclic-valid. 
Since $\ua{e}$ is cyclic-valid, Lemma~\ref{lem:order} implies that $\ra{e}$ and $\la{e}$ are both cyclic-valid. Thus, $f_\phi(\ra{e}) + f_\phi(\la{e}) = 2$.

Now suppose that $f_\phi(\ra{e}) + f_\phi(\la{e}) = 2$. 
By Lemma~\ref{lem:order}, $\ba{e}$ is cyclic-valid and $\ua{e}$ is acyclic-valid. We need to show that $\ba{e}$ is acyclic-valid and $\ua{e}$ is cyclic-valid.
If $\ba{e}$ is not acyclic-valid, then there exists an arc $a = (u,v)\in A$ such that there is a directed path from $v$ to $u$ in $D(\ba{e})$. But then there is a directed path from $v$ to $u$ in either $D(\ra{e})$ or $D(\la{e})$, which is impossible since $\ra{e}$ and $\la{e}$ are acyclic-valid. It remains to show that $\ua{e}$ is cyclic-valid, that is, for every arc $b=(u,v)\in B$ we must show that there is a directed path from $v$ to $u$ in $D(\ua{e})$. If there is a directed path from $v$ to $u$ not containing $e$, then we are done. Otherwise there exists a directed path in $D(\ra{e})$ containing $\ra{e}$ and a directed path in $D(\la{e})$ containing $\la{e}$. But then, as shown in Figure~\ref{fig:proof-path}(a), there is a directed path from $v$ to $u$ avoiding $e$, which contradicts our assumption. This completes the proof of Claim 1.
		
		
\fig{width = \textwidth}{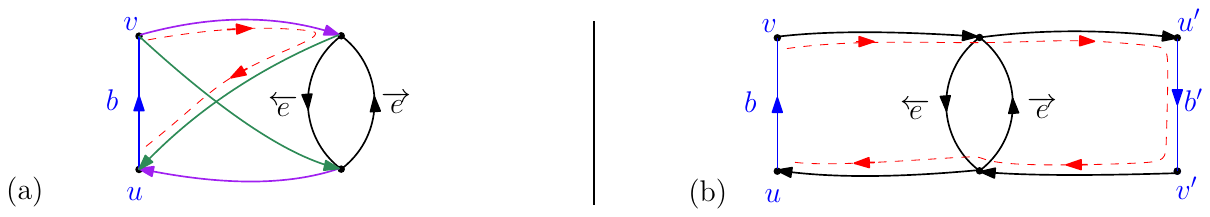}{(a) The case where there are directed paths from $v$ to $u$ containing $\ra{e}$, and one containing $\la{e}$. The dashed path avoids $e$. (b) The case where there is a path from $v$ to $u$ in $D(\la{e})$, and a path from $v'$ to $u'$ in $D(\ra{e})$. The dashed path avoids $e$.}


\noindent \textbf{Claim 2.} $f_\phi(\ba{e}) + f_\phi(\ua{e}) >0$ if and only if $f_\phi(\ra{e}) + f_\phi(\la{e}) >0$.
\smallskip
		
Suppose that $f_\phi(\la{e}) + f_\phi(\ra{e})>0$. Since any 1-way arc must be either cyclic or acyclic, Lemma~\ref{lem:upgrade} implies $f_\phi(\ua{e}) + f_\phi(\ba{e}) > 0$. 

Suppose conversely that $f_\phi(\ua{e}) + f_\phi(\ba{e}) > 0$. Consider first the situation where $\phi\cup\ba{e}$ is $(A,B)$-valid. By Lemma~\ref{lem:order}, both $\la{e}$ and $\ra{e}$ are acyclic-valid. We need to prove that one of them is cyclic-valid. If this is not the case, then there are arcs $b=(u,v)$ and $b'=(u',v')$ in $B$ such that there is no path from $v$ to $u$ in $D(\ra{e})$, and no path from $v'$ to $u'$ in $D(\la{e})$. Since there are such paths in $D(\ba{e})$, we conclude that there is a path from $v$ to $u$ using $\la{e}$ in $D(\la{e})$, and a path from $v'$ to $u'$ using $\ra{e}$ in $D(\ra{e})$. But then, as shown in Figure~\ref{fig:proof-path}(b), there is a directed path from $v$ to $u$ in $D(\ra{e})$ avoiding $e$ entirely, which is a contradiction. 

Consider now the situation where $\phi\cup\ua{e}$ is $(A,B)$-valid. By Lemma~\ref{lem:order}, both $\la{e}$ and $\ra{e}$ are cyclic-valid. We need to prove that one of them is acyclic-valid. If this is not the case, then there exist arcs $a,a'\in A$ such that there is a directed cycle containing $a$ and $\ra{e}$ in $D(\ra{e})$ and a directed cycle containing $a'$ and $\la{e}$ in $D(\la{e})$. But then there is a directed cycle containing $a$ and $a'$ avoiding $e$ entirely, which contradicts the acyclic-validity of $\ua{e}$. This completes the proof of Claim 2.
		
The above claims imply~\eqref{eq:suffice2}, which completes the proof of Theorem~\ref{thm:main}.


	
\section{Equivalence classes of $(A,B)$-valid orientations under cycle-cocycle reversal}
In this section we study equivalence classes of $(A,B)$-valid orientations up to cycle or cocycle reversals. We fix the graph $G = (V,E)$ and the sets $A,B\subseteq V^2$ throughout. 

\subsection{Results for equivalence classes of orientations and fourientations}	
By \emph{reversing a directed cycle} (resp. \emph{cocycle}) of a fourientation $\phi$ of $G$ we mean reversing all the 1-way edges on that directed cycle (resp. cocycle), and leaving the 2-way edges (resp. 0-way edges) unchanged. We call \emph{$(A,B)$-cocycle} of $\phi$ a cocycle of $\vG(A,B;\phi)$ which does not contain any arc from $A\cup B$. 
	
\begin{definition} 
	We say that two fourientations $\phi$ and $\phi'$ of $G$ are \textit{cycle equivalent} (resp. \emph{cocycle equivalent}) if $\phi$ can be obtained from $\phi'$ by repeatedly reversing some directed cycles (resp. $(A,B)$-cocycles). 
	We say that two fourientations $\phi$ and $\phi'$ are \textit{cycle-cocycle equivalent} if $\phi$ can be obtained from $\phi'$ by repeatedly reversing some directed cycles and/or $(A,B)$-cocycles.
\end{definition}

\begin{lemma}\label{lem:ABequiv}
If $\phi$ and $\phi'$ are cycle-cocycle equivalent fourientations of $G$, then $\phi$ is $(A,B)$-valid if and only if $\phi'$ is $(A,B)$-valid. 
\end{lemma}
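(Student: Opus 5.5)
The plan is to reduce to a single reversal step and show that such a step does not change the strongly connected components of $\vG(A,B;\phi)$. By Remark~\ref{rk:valid}, $\phi$ is $(A,B)$-valid if and only if every arc of $A$ is acyclic and every arc of $B$ is cyclic in $\vG(A,B;\phi)$. An arc $(u,v)$ of a digraph is cyclic exactly when $v$ can reach $u$, that is, when $u$ and $v$ lie in the same strongly connected component. The arcs of $A$ and $B$ are never modified by a reversal. Hence $(A,B)$-validity depends only on the partition of $V$ into strong components of $\vG(A,B;\phi)$. Since cycle-cocycle equivalence is generated by single reversals, an induction on the number of reversals reduces the lemma to the following claim: if $\phi'$ is obtained from $\phi$ by reversing one directed cycle or one $(A,B)$-cocycle, then $\vG(A,B;\phi)$ and $\vG(A,B;\phi')$ have the same strong components.

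\emph{Cycle reversal.} Let $C$ be a directed cycle of $\ra{\phi}$, so it contains no arc of $A\cup B$. Only the 1-way arcs of $C$ are reversed; its 2-way edges stay available in both directions. In both digraphs all vertices of $C$ lie on a common directed cycle (namely $C$ or its reverse), so they belong to one strong component. Now take a directed path in the old digraph that uses an arc $(x,y)$ of $C$ which has been reversed. We can replace that arc by the path from $x$ to $y$ going around the reversed cycle. So reachability in the old digraph implies reachability in the new one. The new cycle is again a directed cycle of the new fourientation, and reversing it recovers $\phi$, so the argument is symmetric. Therefore the reachability relations coincide, and in particular the strong components coincide.

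\emph{Cocycle reversal.} Let $V=V_1\cup V_2$ be the partition defining the $(A,B)$-cocycle $S$. Every arc of $\vG(A,B;\phi)$ between $V_1$ and $V_2$ goes from $V_1$ to $V_2$ and lies in $S$. By assumption $S$ contains no arc of $A\cup B$. It also contains no 2-way edge, since such an edge would contribute an arc from $V_2$ to $V_1$. So $S$ consists only of 1-way edges, all of which are reversed. After reversal, the arcs between $V_1$ and $V_2$ all go from $V_2$ to $V_1$, and all arcs inside $V_1$ or inside $V_2$ (including those of $A\cup B$) are unchanged. In both digraphs no closed walk can cross the cut, because the cut is one-directional. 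Hence two vertices are strongly connected in either digraph if and only if they lie in the same part and are strongly connected within the common induced subdigraph on that part. The strong components therefore coincide.

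The step needing the most care is the cocycle case: one must check that an $(A,B)$-cocycle contains only 1-way edges, so that the reversal truly flips the whole cut. One must also check that the defining condition that $S$ avoids $A\cup B$ is what keeps the arcs of $A\cup B$ inside the parts. The cycle case only needs the rerouting observation above.
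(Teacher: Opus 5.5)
Your proof is correct and follows essentially the same idea as the paper: a single directed-cycle or $(A,B)$-cocycle reversal does not change which arcs of $\vG(A,B;\phi)$ are cyclic (equivalently, its strong components), and the arcs of $A\cup B$ are never modified. The only difference is that the paper contracts the 2-way edges and cites the classical invariance for ordinary digraphs, whereas you verify it directly in $\vG(A,B;\phi)$, by rerouting around the reversed cycle and by noting that an $(A,B)$-cocycle is a one-directional cut made only of 1-way edges; this makes your argument more self-contained.
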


\begin{proof} Reversing a directed cycle or $(A,B)$-cocycle of a fourientation $\phi$ of $G$ corresponds to reversing a cycle or a cocycle of the contracted digraph $\vG(A,B;\phi)/T$, where $T$ is the set of 2-way edges of $\phi$. Such a cycle or cocycle reversal does not change the status of arcs between cyclic and acyclic, hence in particular it does not change whether the arcs in $A$ are acyclic and those in $B$ are cyclic (equivalently, it does not change the $(A,B)$-validity of the fourientation).
\end{proof}

By Lemma~\ref{lem:ABequiv} it makes sense to consider cycle (resp. cocycle, cycle-cocycle) equivalence classes of $(A,B)$-valid fourientations. 

\begin{example}
Consider the graph $G$ in Figure~\ref{fig:equivalence-classes}, and the sets $A=\emptyset$, $B=\{(u,v)\}$. There are 11 $(A,B)$-valid orientations. These orientations are partitioned in 6 cycle equivalence classes (resp. 10 cocycle equivalence classes, 5 cycle-cocycle equivalence classes).
\end{example}

\fig{width=\linewidth}{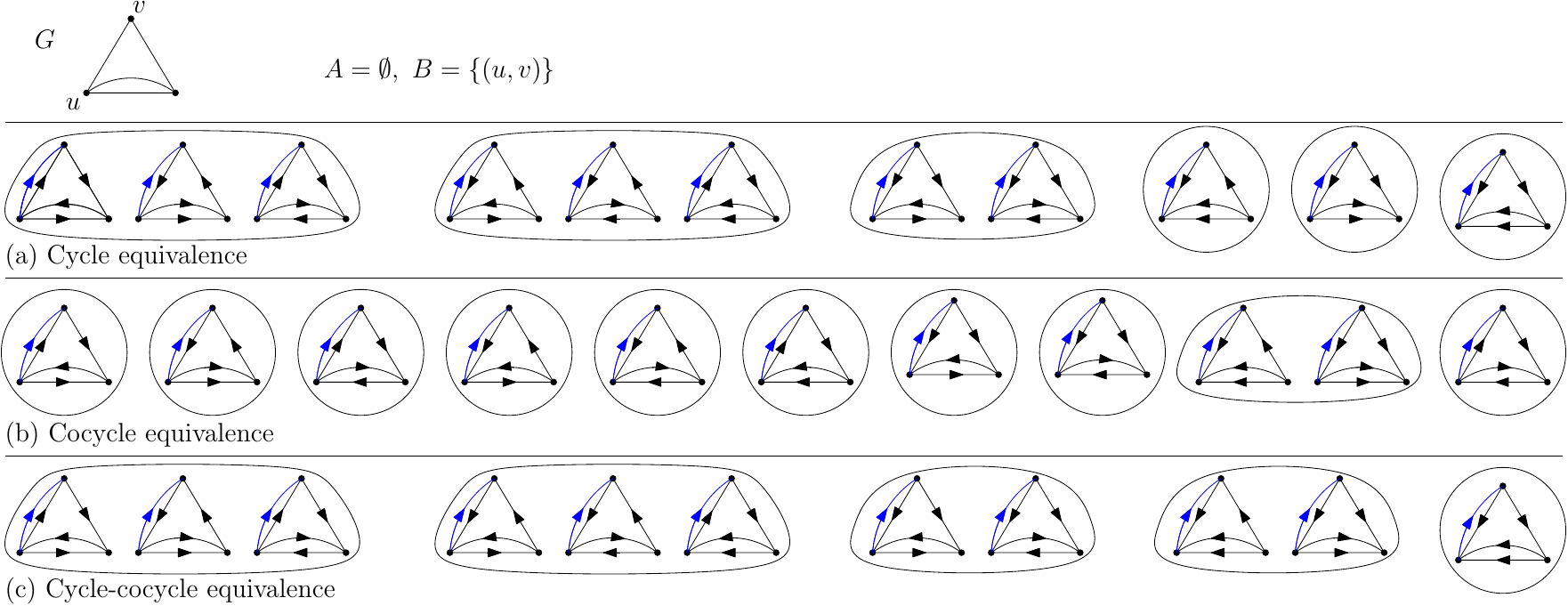}{(a) Cycle equivalence classes of $(A,B)$-valid orientations. (b) Cocycle equivalence classes of $(A,B)$-valid orientations. (c) Cycle-cocycle equivalence classes of $(A,B)$-valid orientations. The cyclic constraint $(u,v)\in B$ is represented by a blue arc.}

	%
	%
	%
	
\begin{definition}\label{def:valid-classes}
	\begin{compactenum}[(i)]
		\item An \textit{$(A,B)$-valid cycle class} is a cycle equivalence class of $(A,B)$-valid fourientations which do not contain any cycle made entirely of 2-way edges. 
		\item An \textit{$(A,B)$-valid cocycle class} is a cocycle equivalence class of $(A,B)$-valid fourientations which do not contain any $(A,B)$-cocycle made entirely of 0-way edges.
		\item An \textit{$(A,B)$-valid cycle-cocycle class} is a cycle-cocycle equivalence class of $(A,B)$-valid fourientations which does not contain any cycle made entirely of 2-way edges nor any $(A,B)$-cocycle made entirely of 0-way edges.
	\end{compactenum}
	For a set $S\subseteq E$ of edges, we denote by $\Four_{G,S}(A,B)/\cyc$ (resp. $\Four_{G,S}(A,B)/\coc$, $\Four_{G,S}(A,B)/\cc$) the set of $(A,B)$-valid cycle (resp. cocycle, cycle-cocycle) classes. 
\end{definition}
	
	Our main result is the following.
	
	\begin{theorem}\label{thm:eqClass}
	Let $G = (V,E)$ be a graph, and let $A,B\subseteq V^2$.
	\begin{compactenum}[(i)]
		\item $|\Four_{G,S}(A,B)/\cyc\,|$ is independent of $S$.
		\item $|\Four_{G,S}(A,B)/\coc\,|$ is independent of $S$.
		\item $|\Four_{G,S}(A,B)/\cc\,|$ is independent of $S$.
	\end{compactenum}
	\end{theorem}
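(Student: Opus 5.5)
We follow the strategy of the proof of Theorem~\ref{thm:main}: it suffices to show that for every $S\subseteq E$ and $e\in E\setminus S$ the number of classes with solid set $S$ equals the number with solid set $S\cup e$. Here we cannot work fourientation by fourientation, because the classes are not products over edges. Instead we use the standard device of \emph{contraction and deletion}, together with a canonical representative per class.

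The key observation concerns how a class behaves with respect to $e$. Cycle reversals preserve the solid set and only flip 1-way edges. Fix a cycle class $\mathcal C$ in $\Four_{G,S}(A,B)/\cyc$. Either $e$ is cyclic in every member of $\mathcal C$, or acyclic in every member; cyclicity of a 1-way edge is invariant under cycle reversal by the argument of Lemma~\ref{lem:ABequiv}.

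\textbf{Case 1: $e$ is acyclic in $\mathcal C$.} Then $e$ is never reversed within $\mathcal C$. Making $e$ 0-way (as in Lemma~\ref{lem:upgrade}) yields $(A,B)$-valid fourientations, and we obtain a cycle class of $G$ with solid set $S\cup e$ in which $e$ is 0-way.

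\textbf{Case 2: $e$ is cyclic in $\mathcal C$.} Both orientations of $e$ occur in $\mathcal C$. Making $e$ 2-way merges them into one class with $e$ 2-way, and it remains valid by Lemma~\ref{lem:upgrade}.

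This gives a map from $\Four_{G,S}(A,B)/\cyc$ to $\Four_{G,S\cup e}(A,B)/\cyc$. We must check that it is well defined, i.e.\ compatible with the equivalence, and that it is a bijection. For the inverse, take a class $\mathcal C'$ with $e$ 0-way. Reinserting $e$ as 1-way in the direction that keeps it acyclic is possible exactly when the endpoints of $e$ are not mutually reachable, and exactly one direction works up to reversal. If $e$ is 2-way in $\mathcal C'$, the condition that there is no cycle of 2-way edges is what guarantees that $e$ can be made 1-way and cyclic, and that the resulting class does not depend on choices. The analysis of the four configurations mirrors Claims~1 and~2. We can organize it by identifying a cycle class with its contracted structure: classes with $e$ 2-way correspond to classes of $G/e$, and classes with $e$ 0-way correspond to classes of $G\setminus e$, with $A,B$ transported. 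For solid set $S$, classes with $e$ cyclic also correspond to $G/e$, and classes with $e$ acyclic correspond to $G\setminus e$, subject to compatibility conditions. The count then reduces to a deletion--contraction identity verified on both sides.

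For (ii), we run the dual argument. $(A,B)$-cocycle reversals fix whether $e$ lies in an $(A,B)$-cocycle. The 0-way/acyclic and 2-way/cyclic pairings swap roles, with the exclusion of 0-way $(A,B)$-cocycles playing the part that the exclusion of 2-way cycles played before. For (iii), we combine the two: in the cycle-cocycle class the status of $e$ (cyclic versus in a cocycle) is still invariant, so the same map applies.

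The main obstacle is the well-definedness and injectivity in Case~2 and its dual. We need two facts. First, a cycle reversal in the 2-way-$e$ fourientation lifts to a sequence of cycle reversals when $e$ is 1-way. Second, two fourientations with $e$ 1-way that become equivalent after making $e$ 2-way were already equivalent. We would first try to prove these using the known description of cycle classes: two orientations are cycle equivalent iff they have the same flow/net-outflow data on each cut, i.e.\ the same indegree vector after contracting 2-way edges. We then check how this invariant changes when $e$ is contracted. Handling the validity conditions involving $A,B$ in this step, namely ensuring that the absence of 2-way cycles matches acyclicity of the contracted structure, is where we expect most of the care to be needed.
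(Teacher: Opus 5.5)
There is a genuine gap. The map you propose sends a class to the class with $e$ 0-way if $e$ is acyclic in it, and to the class with $e$ 2-way if $e$ is cyclic. In general this map is neither injective nor surjective, and the two facts you plan to prove in your last paragraph are false.

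\textbf{First counterexample.} Take $G$ a single non-loop edge $e=\{u,v\}$, $A=B=\emptyset$, $S=\emptyset$. The orientations $\ra{e}$ and $\la{e}$ are two distinct cycle classes, both $e$-acyclic. Both are sent to the class with $e$ 0-way, and the class with $e$ 2-way (the one-edge forest) is never reached. This also refutes your inverse: here ``exactly one direction works up to reversal'' and ``a 2-way $e$ can be made 1-way and cyclic'' both fail.

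\textbf{Second counterexample.} Take three parallel edges $e,f,g$ between $u\neq v$. The cycle classes are indexed by the outdegree $d\in\{0,1,2,3\}$ of $u$, and $e$ is cyclic exactly for $d=1,2$. With $e$ 2-way there is a single class, since each of $f,g$ forms a directed cycle with $e$. With $e$ 0-way there are three classes. Your map sends $d=1$ and $d=2$ to the same 2-way class. So fourientations that become equivalent once $e$ is 2-way need not have been equivalent before. Likewise, reversing the cycle formed by $f$ and the 2-way $e$ does not lift to cycle reversals with $e$ 1-way, because every lift changes $d$. Meanwhile $d=0,3$ reach only two of the three 0-way classes. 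The totals agree ($4=4$), but not class by class.

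\textbf{Further problems.} When $e$ closes a cycle of 2-way edges (e.g.\ $e$ is a loop), your Case~2 lands outside $\Four_{G,S\cup e}(A,B)/\cyc$ altogether. Your fallback correspondence of $e$-cyclic classes with classes of $G/e$ already fails on the parallel-edge example: there are two $e$-cyclic classes but $G/e$ has a single class. Finally, no deletion--contraction recurrence that incorporates the $(A,B)$-constraints and the excluded 2-way cycles or 0-way $(A,B)$-cocycles is actually stated, let alone proved.

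\textbf{The missing idea.} You have to track how classes are linked by flipping $e$ \emph{alone}, and count component by component rather than class by class. The paper does this as follows.
\begin{itemize}
\item It forms the graph $\mathcal{G}_S(A,B)$ on $\mV$, joining two classes when they contain fourientations that differ only in the orientation of $e$.
\item Each block $\ov\phi_{\ra{e}}$ or $\ov\phi_{\la{e}}$ is $e$-adjacent to at most one other block (Lemma~\ref{lem:degree}, via Lemma~\ref{lem:reverse-once}).
\item The outdegree of an endpoint of $e$ in $G/T$ is constant on each class but changes monotonically along a path of flips. Hence every component is a path or a single class with a loop (Corollary~\ref{cor:path}).
\item A path with $k\ge 2$ classes yields exactly one $\ba{e}$-class (the whole path merges) and $k-1$ $\ua{e}$-classes (one per flip edge). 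An isolated class yields one class, and a looped class yields one $\ua{e}$-class.
\end{itemize}
So the $\ua{e}$-classes correspond essentially to edges of $\mathcal{G}_S(A,B)$, not to $e$-acyclic classes.

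The same obstruction affects (ii) and (iii). For instance, in (iii) a component of $\mathcal{G}''_S(A,B)$ can be a cycle with $a$ $e$-acyclic and $b$ $e$-cyclic classes. It yields $a$ $\ba{e}$-classes and $b$ $\ua{e}$-classes, so ``the same map'' cannot be bijective when $a\neq b$.
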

	
	\newcommand{\un}[1]{\underline{#1}}
	Note that $\Four_{G,E}(A,B)/\cyc$ is the set of \emph{forests} of $G$ (subgraphs with no cycle). For a subgraph $F$ of $G$, let us denote by $F\cup \un{A}\cup\un{B}$ the graph obtained from $F$ by adding the set of edges $\{\{u,v\}\mid (u,v)\in A\cup B\}$. We say that $F$ is \emph{$(A,B)$-connected} if the connected components of $F\cup \un{A}\cup\un{B}$ and $G\cup \un{A}\cup\un{B}$ are equal. Then $\Four_{G,E}(A,B)/\coc$ is the set of $(A,B)$-connected subgraphs of $G$, while $\Four_{G,E}(A,B)/\cc$ is the set of $(A,B)$-connected forests of $G$. These sets of $(A,B)$-valid subgraphs are represented in Figure~\ref{fig:equivalence-classes}.

\fig{width=\linewidth}{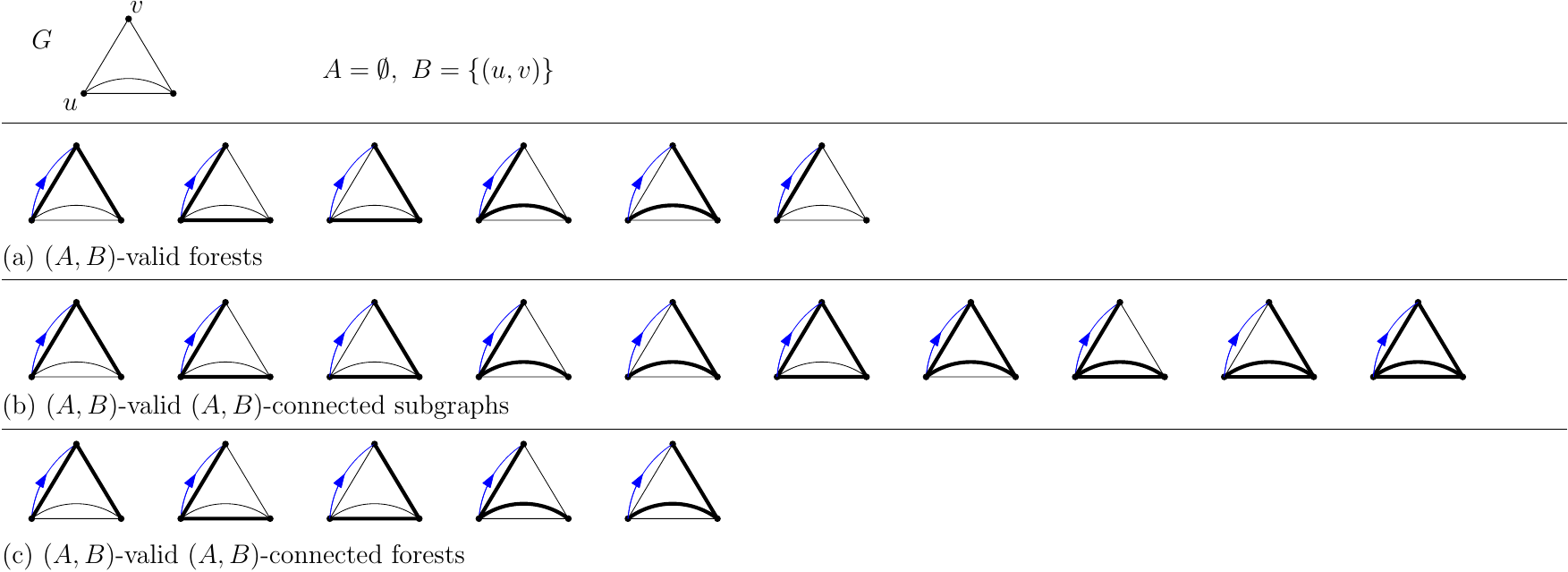}{(a) The $(A,B)$-valid forests (represented by thick edges). (b) The $(A,B)$-valid $(A,B)$-connected subgraphs. (c) The $(A,B)$-valid $(A,B)$-connected forests.}

Comparing the cases of $S=\emptyset$ and $S=E$ in Theorem~\ref{thm:eqClass} we obtain the following corollary:
\begin{corollary}\label{cor:equiv}
	Let $G = (V,E)$ be a graph, and let $A,B\subseteq V^2$. 
	\begin{compactenum}[(i)]
		\item The number of cycle equivalence classes of $(A,B)$-valid orientations is equal to the number of $(A,B)$-valid forests.
		\item The number of cocycle equivalence classes of $(A,B)$-valid orientations is equal to the number of $(A,B)$-valid $(A,B)$-connected subgraphs.
		\item The number of cycle-cocycle equivalence classes of $(A,B)$-valid orientations is equal to the number of $(A,B)$-valid $(A,B)$-connected forests.
	\end{compactenum}	
\end{corollary}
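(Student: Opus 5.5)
Corollary~\ref{cor:equiv} follows from Theorem~\ref{thm:eqClass} once we identify the two extreme cases $S=\emptyset$ and $S=E$. By Theorem~\ref{thm:eqClass} (i), (ii), (iii), each of the three cardinalities $|\Four_{G,S}(A,B)/\cyc|$, $|\Four_{G,S}(A,B)/\coc|$ and $|\Four_{G,S}(A,B)/\cc|$ takes the same value for $S=\emptyset$ and for $S=E$. It therefore suffices to describe the classes in these two cases.

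\textbf{The case $S=\emptyset$.} Fourientations with no solid edges are exactly the orientations of $G$. The extra conditions in Definition~\ref{def:valid-classes} are then vacuous: a cycle made entirely of 2-way edges, or an $(A,B)$-cocycle made entirely of 0-way edges, would have to be empty. Note that a cocycle of $\vG(A,B;\al)$ is nonempty by definition, so an $(A,B)$-cocycle consists of at least one edge of $G$. Hence $\Four_{G,\emptyset}(A,B)/\cyc$, $\Four_{G,\emptyset}(A,B)/\coc$ and $\Four_{G,\emptyset}(A,B)/\cc$ are exactly the cycle, cocycle and cycle-cocycle equivalence classes of $(A,B)$-valid orientations. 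These classes are well defined by Lemma~\ref{lem:ABequiv}.

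\textbf{The case $S=E$.} Every edge is solid, so reversals act trivially: a directed cycle or cocycle has no 1-way edges to reverse. Each class is therefore a singleton $\{H\}$, where $H$ is an $(A,B)$-valid subgraph.

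\textbf{Cycle classes.} Such a class is valid exactly when $H$ contains no cycle of 2-way edges, that is, when $H$ is a forest. This gives (i).

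\textbf{Cocycle classes.} We need to show that $H$ has no $(A,B)$-cocycle made of 0-way edges if and only if $H$ is $(A,B)$-connected. This is the only step needing a small argument.

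Suppose $H$ is not $(A,B)$-connected. Then some component $C$ of $G\cup\un{A}\cup\un{B}$ splits into at least two components of $H\cup\un{A}\cup\un{B}$. Let $V_1$ be one of these components, and $V_2=V\setminus V_1$. No arc of $\vG(A,B;H)$ crosses between $V_1$ and $V_2$, since 2-way edges and the arcs of $A\cup B$ stay inside components of $H\cup\un{A}\cup\un{B}$. Hence the set of 0-way edges of $G$ joining $V_1$ to $V_2$ forms a cocycle with no arcs in either direction in the digraph, i.e.\ an $(A,B)$-cocycle made of 0-way edges. It is nonempty because $V_1\subsetneq C$ and $C$ is connected in $G\cup\un{A}\cup\un{B}$, while no arc of $A\cup B$ crosses the cut.

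Conversely, suppose there is an $(A,B)$-cocycle made entirely of 0-way edges, with partition $(V_1,V_2)$. No 2-way edge crosses the cut, since it would contribute arcs in both directions; for the same reason no arc of $A\cup B$ crosses it. So $H\cup\un{A}\cup\un{B}$ has no edge between $V_1$ and $V_2$, while $G$ has at least one 0-way edge across the cut. Hence the components differ, and $H$ is not $(A,B)$-connected. This gives (ii).

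\textbf{Cycle-cocycle classes.} A class is valid exactly when both conditions above hold, so these classes correspond to the $(A,B)$-valid $(A,B)$-connected forests. This gives (iii).

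The main point to check carefully is the cut argument in the cocycle case, namely that the nonemptiness requirement in the definition of a cocycle matches the $(A,B)$-connectivity definition. Everything else is bookkeeping.
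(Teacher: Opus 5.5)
Your proposal is correct and follows the paper's route: you specialize Theorem~\ref{thm:eqClass} to $S=\emptyset$ and $S=E$, then identify the resulting classes as classes of $(A,B)$-valid orientations and as single $(A,B)$-valid subgraphs, respectively. The only difference is that you actually prove the identification of $\Four_{G,E}(A,B)/\coc$ with the $(A,B)$-connected subgraphs, via your cut argument (correctly using that cocycles are nonempty), whereas the paper simply asserts it.
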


The results in Corollary~\ref{cor:equiv} for the case $A=B=\emptyset$ were first established by Gioan in~\cite{Gioan:degree-sequences}. Our proof techniques are entirely different. 
Let us also mention some other relevant references. The number of cycle/cocycle/cycle-cocycle equivalence classes of a graph $G$ are given by some evaluations of the Tutte polynomial.
It can be shown that the number of cycle equivalence classes of a graph $G$ is equal to the number of outdegree sequences (a.k.a. score vectors), and the fact that the number of outdegree sequences is equal to the number of forests was first established by Stanley in~\cite{Stanley:rationnal-polytopes} (see the survey \cite{EllisMonaghanMerino2010}).
With a bit of work, the number of cycle-cocycle equivalence classes of a graph $G$ can be related to the cardinality of its Jacobian (a.k.a. Picard group, sandpile group), and the fact that the cardinality of the Jacobian is equal to the number of spanning tree is the object of the celebrated Matrix-Tree theorem.

Before proceeding to the proof of Theorem \ref{thm:eqClass}, let us illustrate Corollary~\ref{cor:equiv} on a couple of examples.
	%

\begin{example}
One can check Corollary~\ref{cor:equiv} for the graph $G$ represented in Figures~\ref{fig:equivalence-classes} and~\ref{fig:AB-valid-subgraphs}. For instance, the cycle equivalence classes of $(A,B)$-valid orientations in Figure~\ref{fig:equivalence-classes} are equinumerous to the $(A,B)$-valid forests in Figure~\ref{fig:AB-valid-subgraphs}.
\end{example}
	
\begin{example}\label{ex:Aeq}
Fix a graph $G$ and two vertices $u,v$. Applying Corollary~\ref{cor:equiv}(i) with the sets $A =\lbrace (u,v)\rbrace $ and $B = \emptyset$, we get that the cycle equivalence classes of orientations of $G$ such that $v$ cannot reach $u$ is equal to the number of forests of $G$ such that $u$ and $v$ are in separate trees. This is illustrated in Figure~\ref{fig:cycle-A-single-arc}.
\end{example}
	
\fig{width = 0.8\textwidth}{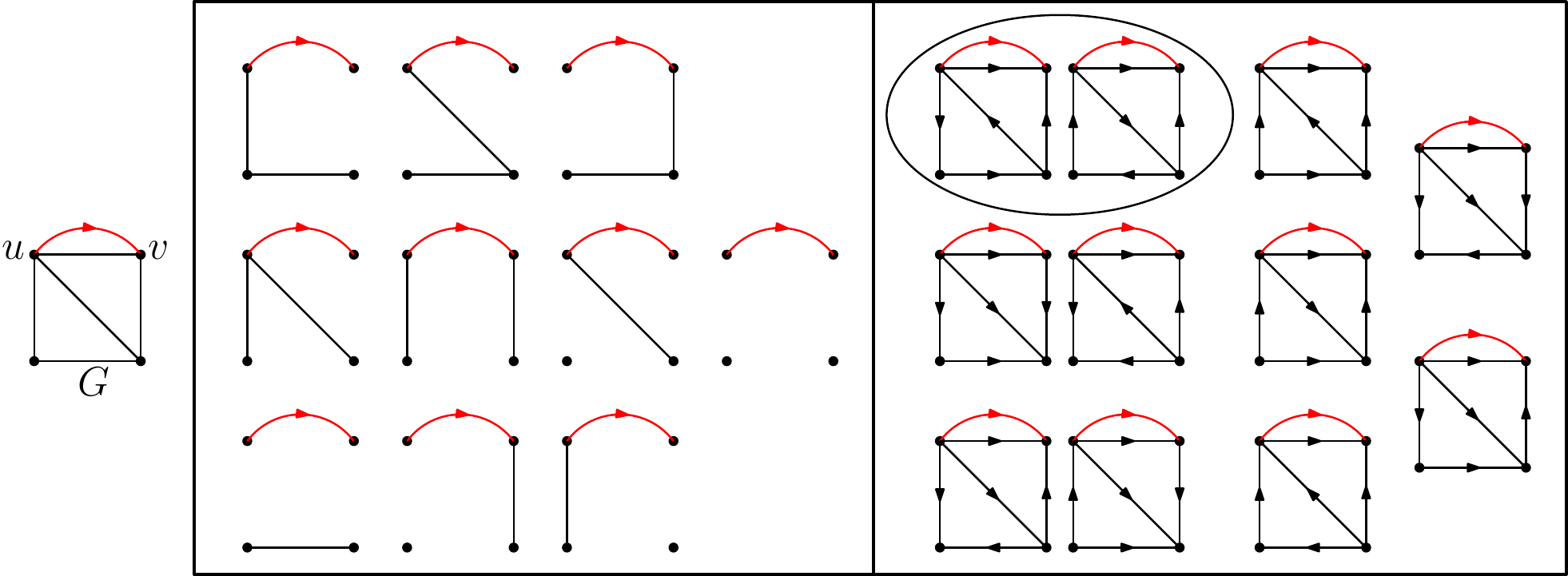}{An illustration of Corollary~\ref{cor:equiv}(i) for $A =\lbrace (u,v)\rbrace $ and $B = \emptyset$. The acyclic constraint $(u,v)\in A$ is represented by a red arc.
	Left: The forests for which $u$ and $v$ are in separate trees. Right: orientations in which $v$ cannot reach $u$. The two orientations which are cycle equivalent are circled.}

	%
	%
	%
	%
	%

\subsection{Proof of Theorem~\ref{thm:eqClass} for cycle-reversing classes}
Before starting the proof of Theorem~\ref{thm:eqClass}, we establish a result about cycle and cocycle equivalences which may be of independent interest.
\begin{lemma}\label{lem:reverse-once}
If two fourientations $\phi,\phi'$ are cycle (resp. cocycle, cycle-cocycle) equivalent, then $\phi$ can be obtained from $\phi'$ by repeatedly reversing some directed cycles (resp. directed $(A,B)$-cocycles, directed cycles or $(A,B)$-cocycles) which do not contain any 1-way edge having the same orientation in $\phi$ and $\phi'$.
\end{lemma}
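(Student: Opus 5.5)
The plan is to find a canonical decomposition of the set $R\subseteq E$ of 1-way edges on which $\phi$ and $\phi'$ differ. Note first that cycle and cocycle reversals only flip 1-way edges, so $\phi$ and $\phi'$ have the same 0-way and 2-way edges. Let $T$ be the set of 2-way edges of $\phi$, and work in the contracted digraph $D=\vG(A,B;\phi)/T$ with the 0-way edges deleted. In the cocycle case we keep the arcs of $A\cup B$ in $D$; they can never lie in a reversed cocycle and must not cross one. In this language a directed cycle of $\phi$ is a directed cycle of $D$ made of 1-way edges, and an $(A,B)$-cocycle is a directed cocycle of $D$ avoiding $A\cup B$. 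Let $\psi$ be the flow-like vector on the arcs of $\phi$ that is $1$ on the arcs of $R$ (oriented as in $\phi$) and $0$ elsewhere.

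\textbf{Cycle case.} Each directed-cycle reversal preserves the net outdegree (indegree minus outdegree) of every vertex of $D$. Hence the arcs of $R$, oriented as in $\phi$, form a circulation in $D$: every vertex has equal in- and out-degree within $R$. A standard Eulerian argument then decomposes $R$ into arc-disjoint directed cycles of $\phi$. We reverse these cycles one after another. Since they are arc-disjoint, each cycle remains directed after the earlier ones have been reversed. Each of them uses only edges of $R$, that is, edges whose orientations differ in $\phi$ and $\phi'$, which is exactly what the statement asks.

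\textbf{Cocycle case.} We argue dually via potentials. Reversing the directed cocycle of the cut $(V_1,V_2)$ changes the orientation of an edge exactly when it crosses the cut. Composing such reversals, the edges of $R$ are those crossing an odd number of the cuts used. The cleaner route is to choose a sequence of reversals of minimal total length and then decompose by layers. Let $h:V\to\mathbb Z$ record, for each vertex, the net number of times it lay on the source side of a reversed cocycle. Then the edges of $R$ are exactly the edges $\{x,y\}$ with $h(x)\neq h(y)$, with a possible parity subtlety which minimality should eliminate. We then reverse the level-set cuts $\{h\ge t\}$ for successive thresholds $t$. Each such cut consists only of 1-way edges directed consistently, all lying in $R$, and contains no arc of $A\cup B$, since the vertices joined by such an arc receive equal $h$-values at each elementary step. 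The step I expect to be the main obstacle is showing that the level-set cuts are directed cocycles at the moment they are reversed. Here I would imitate the proof that reachable states in chip-firing or cut-reversal are reachable by ``monotone'' firing sequences: order the thresholds appropriately, and use that every edge in the cut at level $t$ must end up reversed, hence is directed across from $\{h\ge t\}$ to its complement.

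\textbf{Cycle-cocycle case.} By the orthogonality of cycles and cocycles (the $\mathbb Z$-flow/tension decomposition), the difference vector $\psi$ splits as a circulation plus a tension. Combining the two arguments, I would first reverse the cocycles of the tension part by the level-set argument, and then decompose what remains into directed cycles as in the cycle case. The delicate point is to choose the decomposition conformally, meaning the circulation and tension parts have no cancelling signs on any edge. This is what guarantees that no reversed piece touches an edge of equal orientation in $\phi$ and $\phi'$. I would obtain such a conformal choice by taking a sequence of reversals minimizing the total number of edge flips and arguing that any cancellation could be removed, shortening the sequence.
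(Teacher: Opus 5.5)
Your cycle case is correct, and it takes a different route from the paper. The paper chooses a reversal sequence of minimal total length and merges two consecutive cycles sharing an edge via Fact~\ref{fact:disjoint-decomposition}. You instead use that outdegrees in the contracted graph are invariant, so the differing edges, oriented as in $\phi$, are balanced and split into edge-disjoint directed cycles. This is shorter and avoids the Fact. The other two cases, however, are not established.

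\textbf{Cocycle case.} The step you flag as the main obstacle is the whole content of the argument. Your hedge (a ``parity subtlety'' that minimality should remove) points the wrong way: no minimality is needed. What you must prove is that $h$ changes by at most $1$ along each 1-way edge, and that differing edges point downhill in $\phi$. Take $h(x)$ to be the number of reversals in which $x$ lies on the source side. An induction along the sequence then shows that for every 1-way edge currently oriented $x\to y$ one has $h(y)-h(x)\in\{0,1\}$, with value $1$ exactly when the edge has been flipped an odd number of times. Indeed, a crossing edge points from the source side to the sink side, so the endpoint that becomes its head gains $1$. Also, 2-way edges and arcs of $A\cup B$ never cross a reversed cut. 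Hence in $\phi$ each differing edge goes from a vertex of value $t$ to one of value $t-1$, and all other edges and arcs join vertices of equal value. So the cuts $\{h\ge t\}$ are pairwise edge-disjoint directed $(A,B)$-cocycles of $\phi$ that partition the differing edges, and they can be reversed in any order. Without this invariant, your claim that each level cut is ``directed consistently'' is unsupported.

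\textbf{Cycle-cocycle case.} Here there is a genuine missing idea. The splitting of the difference vector into a circulation plus a tension is unique, since flows and tensions are orthogonal. So there is nothing to ``choose'': conformality (in fact, disjointness of the two supports) has to be proved, and ``minimize the number of flips and remove cancellations'' is not an argument.

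The paper's key observation supplies this. A directed cycle and a directed $(A,B)$-cocycle of the same fourientation share no edge. Both kinds of reversal preserve the strongly connected components of $\ra{\phi}$, hence the set of cyclic 1-way edges. So in any mixed sequence, cycle reversals flip only edges of this invariant set, and cocycle reversals flip only edges outside it. Whether a move is legal depends only on the orientations of edges in its own class, so each move stays legal when moved past a move of the other kind. You may therefore reorder the sequence as all cocycle reversals followed by all cycle reversals, with disjoint flipped sets. Then apply your cocycle and cycle arguments to the two halves.
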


The proof of Lemma \ref{lem:reverse-once} uses the following fact where we denote by $-a$ the arc opposite to~$a$.
\begin{fact} \label{fact:disjoint-decomposition} If two directed cycles (resp. cocycles) $C,C'$ have no common arc (but potentially some opposite arcs), then the set of arcs 
$\Delta=\{a\in C\mid -a\notin C'\}\cup \{a\in C'\mid -a\notin C\}$ can be obtained as a disjoint union of directed cycles (resp. cocycle).
\end{fact}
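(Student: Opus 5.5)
We view directed cycles and cocycles as subsets of arcs of a digraph. Since $C$ and $C'$ share no arc, we encode each as a vector in $\mathbb{Z}^{\oE}$, where the arcs are the arcs of the underlying digraph $\oG$ together with their opposites. To do this we identify $\pm a$ with a single edge coordinate carrying a sign. The sum $\chi_C+\chi_{C'}$ is then a signed vector on edges: coordinates where $C$ uses $a$ and $C'$ uses $-a$ cancel to $0$, and every remaining coordinate is $\pm1$. The arcs with nonzero coordinate are exactly the set $\Delta$. Because the coefficients lie in $\{-1,0,1\}$, the support of this vector determines an oriented arc set.

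\textbf{Cycle case.} A directed cycle is a circulation: at each vertex, inflow equals outflow. The sum of two circulations is again a circulation, so $\chi_C+\chi_{C'}$ is a $\{0,\pm1\}$-valued circulation whose support, read with the signs as orientations, is the arc set $\Delta$. In this arc set every vertex has in-degree equal to out-degree. A standard argument then applies: starting from any arc of $\Delta$, follow out-arcs until a vertex repeats. This extracts a directed cycle, which we remove, and the Eulerian balance is preserved. Iterating decomposes $\Delta$ into arc-disjoint directed cycles. The only point to verify is that the cancellation never leaves a coordinate of $2$, and this holds precisely because $C$ and $C'$ have no common arc.

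\textbf{Cocycle case.} We use the dual, conformal-decomposition viewpoint. A directed cocycle determined by a partition $(V_1,V_2)$ is the signed vector $\delta(\mathbb{1}_{V_1})$: it is $+1$ on arcs from $V_1$ to $V_2$, $-1$ on arcs from $V_2$ to $V_1$, and $0$ otherwise. In a fourientation digraph the condition that no arc goes backward means that the restriction to actual arcs is nonnegative. We would therefore work in the cut space of the underlying undirected graph, with each arc of $\oG$ present in the digraph. The sum $\chi_C+\chi_{C'}=\delta(\mathbb{1}_{V_1}+\mathbb{1}_{V_1'})$ is a tension, namely the coboundary of a potential $p$ taking values in $\{0,1,2\}$, and its support with signs gives $\Delta$. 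Every arc of $\Delta$ goes from a lower to a higher potential, or the reverse, in a consistent direction. We then decompose by level sets: for $t=1,2$, let $W_t=\{x: p(x)\ge t\}$. Each coboundary $\delta(\mathbb{1}_{W_t})$ is conformal to the sum, so its arcs all point in the same direction as in $\Delta$, and the sum over $t$ recovers $\Delta$ disjointly. Each nonempty piece is a directed cocycle of the digraph formed by $\Delta$, or more precisely a disjoint union of such cocycles, since a cut $\delta(W)$ is a disjoint union of bonds.

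\textbf{Main obstacle.} The delicate step is the cocycle case, specifically making precise in which digraph the pieces are directed cocycles. The natural ambient is the digraph whose arc set is $(D\setminus(C\cup C'))\cup\Delta$, that is, the digraph obtained by reversing the cancelled arcs. In that digraph we must check that no arc crosses a level cut $W_t$ backward. We would do this by a short case analysis on the potentials of the endpoints of an arc lying outside $C\cup C'$. Such an arc cannot cross either original cut backward, so its potential difference is $\ge 0$ in the appropriate direction, and hence it never crosses a level set backward.
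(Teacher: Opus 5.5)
Your proposal is correct and follows the same route the paper sketches: you treat $\chi_C+\chi_{C'}$ as a $\{0,\pm1\}$-vector in the cycle (resp.\ cocycle) space whose signed support is $\Delta$, and decompose it conformally, making this explicit by the Eulerian argument for cycles and by the level sets of the potential $p\in\{0,1,2\}$ for cocycles. Your ``main obstacle'' is in fact immediate from your own setup: an edge crossing a level cut $W_t$ has nonzero tension, so it lies in $\Delta$ with its arc leaving $W_t$, and every arc of $\Delta$ already belongs to the orientation in which $C$ is a directed cocycle; hence each level cut is a directed cocycle of that orientation, and edges of zero tension (including the cancelled ones, which your stated ambient digraph omits) cross no level cut at all.
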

Although this fact sounds folklore (and is easy to establish, especially for cycles), the earlier references we could locate for the cocycle version are~\cite{Backman:partial, BBY:geometric-bijections,Ding:geometric-bijections}, the latter two applying to the larger context of regular oriented matroids. In this context, the property follows from interpreting $\Delta$ as a $\{0,1,-1\}^E$ vector in the cycle (resp. cocycle) space, and showing that such vectors are sums of disjoint directed cycles (resp. cocycles).

\begin{proof}[Proof of Lemma \ref{lem:reverse-once}]
Let us consider orientations first, and prove the stated property for two cycle equivalent orientations $\phi,\phi'$. 
By definition, there exists a sequence of directed cycles $C_1,\ldots,C_n$ such that one obtains $\phi'$ from $\phi$ by successively reversing $C_1,\ldots,C_k$. Consider such a sequence of directed cycle for which the total sum of the length of the cycles is minimal. We want to prove that the cycles $C_1,\ldots,C_k$ do not use any edge twice. Suppose for contradiction that there is $i<j$ such that the cycles $C_i$ and $C_j$ use the same edge, and take $j$ minimal with this property. By the minimality of $j$, the cycles $C_1,\ldots,C_{j-1}$ have no common edges, so their reversal can be done in any order. So we can now assume that $i=j-1$. Observe that $C_{j-1}$ and $C_j$ have no arc in common (since they are successive in the sequence), Fact~\ref{fact:disjoint-decomposition} ensures that the set 
$$C:=\{c\in C_{j-1}\mid -c\notin C_j\}\cup \{c\in C_{j}\mid -c\notin C_{j-1}\}$$
can be obtained as a disjoint union of directed cycles $C_1',\ldots,C_k'$. Observe now that one obtains $\phi'$ from $\phi$ by successively reversing the directed cycles $C_1,\ldots,C_{j-2}$ then $C_1',\ldots,C_k'$, and then $C_{j+1},\ldots,C_{n}$. This is in contradiction with our choice of the sequence $C_1,\ldots,C_n$ (minimal total length). This proves the property for cycle equivalent orientations. 

The proof for cocycle equivalent orientations is identical, up to replacing cycles by cocycles and again using Fact~\ref{fact:disjoint-decomposition}. 
(In our context we need to restrict our attention to $(A,B)$-cocycles of $G$, but these are the same as directed cocycles of the graph $G'=G\cup A\cup B$ which do not use the edges in $A,B$, so the fact and proof still hold.)
Lastly, if some orientations $\phi,\phi'$ are cycle-cocycle equivalent, then the cycle reversals and cocycle reversals act on disjoint set of edges:  cycle reversals act on cyclic edges while cocycle reversals act on acyclic edges (and the sets of cycle edges and acyclic edges are unchanged by reversals of directed cycles or directed cocycles), so the cycle and cocycle reversal are independent, and the above proof still applies.

Now consider the case of cycle (resp. cocycle, cycle-cocycle) equivalent fourientations $\phi,\phi'$. The set $R$ of 0-way edges and the set $T$ of 2-way edges are the same in $\phi$ and $\phi'$. Consider the orientations $\al=\phi\setminus R/T$ and $\al'=\psi\setminus R/T$ obtained by deleting the 0-way edges and contracting the 2-way edges. It is easy to see that the orientation $\al,\al'$ are cycle (resp. cocycle, cycle-cocycle) equivalent. By the above,  $\al$ can be obtained from $\al'$ by repeatedly reversing some disjoint directed cycles (resp. directed $(A,B)$-cocycles, directed cycles or $(A,B)$-cocycles) of $G\setminus R/T$. It is then clear that $\phi$ can be obtained from $\phi'$ by repeatedly reversing some directed cycles (resp. directed $(A,B)$-cocycles, directed cycles or $(A,B)$-cocycles) which do not contain any 1-way edge having the same orientation in $\phi$ and $\phi'$.
\end{proof}

We proceed to prove statement (i) of Theorem~\ref{thm:eqClass}. It suffices to show that for every set $S\subset E$ and edge $e\in E\setminus S$ one has 
\begin{equation}\label{eq:classes-e}
	|\Four_{G,S}(A,B)/\cyc| = |\Four_{G, S\cup\lbrace e\rbrace}(A,B)/\cyc|. 
\end{equation}
	
Fix $S$ and $e$ as above, and let $\mV:=\Four_{G,S}(A,B)/\cyc$ and $\mV':=\Four_{G, S\cup\lbrace e\rbrace}(A,B)/\cyc$. In order to prove that $|\mV|=|\mV'|$ we will now define a graph $\mathcal{G}_S(A,B)$ with vertex set~$\mV$. We say that two equivalence classes $\ov\phi$ and $\ov\psi$ in $\mV$ are \emph{$e$-adjacent} if there are fourientations $\phi'\in\ov\phi$ and $\psi'\in\ov\psi$ such that $\phi'$ and $\psi'$ coincide except that they have opposite orientations of $e$. Let $\mathcal{G}_S(A,B)$ be the graph (without multiple edges) with vertex set $\mV$, where two equivalence classes are connected by an edge if and only if they are $e$-adjacent.
The graph $\mathcal{G}_S(A,B)$ is represented in Figure~\ref{fig:G-class-combined-ex2}(a).


We make a few definitions and observations. For an equivalence class $\overline{\phi}\in \mV$ we denote by $\overline{\phi}_{\ra{e}}$ (resp. $\overline{\phi}_{\la{e}}$) the subset of fourientations in $\ov\phi$ containing $\ra{e}$ (resp. $\la{e}$). We call \emph{$\mV$-block} a non-empty subset of the form $\ov{\phi}_{\ra{e}}$ or $\ov{\phi}_{\la{e}}$ for some class $\ov\phi\in \mV$. In Figure~\ref{fig:G-class-combined-ex2}(a), the classes have been divided into their blocks.	We say that two $\mV$-blocks $B,B'$ are \emph{$e$-adjacent} if there exist fourientations $\phi\in B$ and $\psi\in B'$ such that $\phi$ and $\psi$ coincide except that they have opposite orientations of $e$. Note that two equivalence classes $\ov\phi$ and $\ov\psi$ in $\mV$ are $e$-adjacent if and only if a block of $\ov\phi$ is $e$-adjacent to a blocks of $\ov\psi$.
Lastly, observe that in each class $\overline{\phi}\in\mV$ the edge $e$ is either cyclic in every fourientation in $\ov \phi$, or acyclic in every fourientation in $\ov \phi$. We refer to the class $\ov \phi$ as either \textit{$e$-cyclic} or \textit{$e$-acyclic} accordingly.

\begin{lemma}\label{lem:degree}
Every $\mV$-block is $e$-adjacent to at most one other $\mV$-block. Consequently, every $e$-acyclic (resp. $e$-cyclic) equivalence class $\overline{\phi}$ in $\mV$ has degree at most 1 (resp. 2) in the graph $\mathcal{G}_S(A,B)$.
\end{lemma}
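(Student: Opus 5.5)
The plan is to show that flipping $e$ sends a whole $\mV$-block into a single $\mV$-block. For a fourientation $\phi$ let $\phi^e$ denote the fourientation obtained by reversing the 1-way edge $e$. Fix a $\mV$-block $\beta$, say $\beta=\ov\phi_{\ra{e}}$. A block $\beta'$ is $e$-adjacent to $\beta$ exactly when $\beta'$ contains $\psi^e$ for some $\psi\in\beta$. So the first statement follows once we show the following: whenever $\psi_1,\psi_2\in\beta$ and both $\psi_1^e,\psi_2^e$ lie in classes of $\mV$, they lie in the same block.

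\textbf{Step 1.} Take $\psi_1,\psi_2\in\beta$. They are cycle equivalent, and $e$ is a 1-way edge with the same orientation $\ra{e}$ in both. By Lemma~\ref{lem:reverse-once}, $\psi_2$ is obtained from $\psi_1$ by successively reversing directed cycles $C_1,\dots,C_k$, none of which contains $e$.

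\textbf{Step 2.} Apply the same reversals to $\psi_1^e$. Each intermediate fourientation for $\psi_1^e$ differs from the corresponding one for $\psi_1$ only on $e$. Since $C_i$ avoids $e$, $C_i$ is still a directed cycle at each stage. Hence reversing $C_1,\dots,C_k$ in turn transforms $\psi_1^e$ into $\psi_2^e$, so $\psi_1^e$ and $\psi_2^e$ are cycle equivalent.

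\textbf{Step 3.} Check that this stays inside $\mV$. By Lemma~\ref{lem:ABequiv}, $\psi_1^e$ is $(A,B)$-valid if and only if $\psi_2^e$ is. The 2-way edges of $\psi_i^e$ are the same as those of $\psi_i$ (namely the 2-way edges of $S$), so the condition of having no cycle made of 2-way edges also transfers. Thus, whenever they belong to $\mV$, $\psi_1^e$ and $\psi_2^e$ lie in the same class. Both contain $\la{e}$, so they lie in the same block $\ov{\psi_1^e}_{\la{e}}$. This proves that $\beta$ is $e$-adjacent to at most one block.

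\textbf{Step 4 (degree bound).} A class $\ov\phi$ has at most two blocks, $\ov\phi_{\ra{e}}$ and $\ov\phi_{\la{e}}$. If $\ov\phi$ is $e$-acyclic, then $e$ lies on no directed cycle of any member of the class. Hence no cycle reversal ever flips $e$, all members share the orientation of $e$, and $\ov\phi$ consists of a single block. Since each block has at most one $e$-adjacent block, and classes are $e$-adjacent exactly when some of their blocks are, the degree of $\ov\phi$ in $\mathcal{G}_S(A,B)$ is at most the number of its blocks. This gives degree at most 1 in the $e$-acyclic case and at most 2 in the $e$-cyclic case.

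The only delicate point is Step 1. An arbitrary sequence of cycle reversals relating $\psi_1$ and $\psi_2$ might pass through $e$, perhaps twice, and then the transfer in Step 2 fails. Lemma~\ref{lem:reverse-once} is exactly what removes this issue.
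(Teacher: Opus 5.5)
Your proposal is correct and follows essentially the same route as the paper. Lemma~\ref{lem:reverse-once} connects two members of a block by cycle reversals avoiding $e$, these reversals transfer verbatim to the $e$-flipped fourientations, and the degree bound follows because an $e$-acyclic class has a single block while an $e$-cyclic class has at most two. Your Steps 2 and 3 simply spell out details the paper leaves implicit: that each $C_i$ remains a directed cycle after $e$ is flipped, and that validity and the no-2-way-cycle condition carry over.
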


\begin{proof} Let $B$ be a $\mV$-block. Let $\phi$ and $\phi'$ be fourientations in $B$, and let $\psi$ and $\psi'$ be the fourientations obtained from $\phi$ and $\phi'$ by reversing the orientation of $e$. By Lemma~\ref{lem:reverse-once}, the fourientation $\phi$ can be obtained from $\phi'$ by reversing some directed cycles not containing $e$.
Hence, the same holds for $\psi$ and $\psi'$, which implies that they are cycle equivalent. Thus, if $\psi$ and $\psi'$ are $(A,B)$-valid then they belong to the same $\mV$-block (in fact, reversing $e$ gives a bijection between the $\mV$-block $B$ containing $\phi$ and the $\mV$-block $B'$ containing $\psi$). Therefore, $B$ is $e$-adjacent to at most one $\mV$-block.

The second statement follows by observing that $e$-acyclic classes have a single $\mV$-block, while the $e$-cyclic classes have two $\mV$-blocks. 
	\end{proof}
	
It follows from Lemma~\ref{lem:degree} that each connected component of the graph $\mathcal{G}_S(A,B)$ must be a path or a cycle. One can actually be a bit more precise: 	
	
	\begin{cor}\label{cor:path}
	Each connected component of $\mathcal{G}_S(A,B)$ is a either a path or a cycle of length 1 (that is, a single vertex $\ov\phi\in \mV$ incident to a loop).
	A class $\ov\phi\in \mV$ is in a cycle of length 1 in $\mathcal{G}_S(A,B)$ if and only if $e$ is the unique 1-way edge in a directed cycle of the fourientation $\phi$ (all the other edges on that cycle being 2-way edges). 
	\end{cor}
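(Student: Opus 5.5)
By Lemma~\ref{lem:degree}, every vertex of $\mathcal{G}_S(A,B)$ has degree at most $2$. Hence each connected component is a path, a cycle of length at least $2$, or a single vertex with a loop. (A vertex with a loop counts degree $1$ here, since each block is $e$-adjacent to at most one block.) The plan is therefore to do two things: rule out cycles of length $\ge 2$, and characterize the loops.

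\textbf{Characterizing loops.} A loop at $\ov\phi$ means the two blocks $\ov\phi_{\ra{e}}$ and $\ov\phi_{\la{e}}$ are $e$-adjacent. By the bijection in the proof of Lemma~\ref{lem:degree}, this says that some $\phi\in\ov\phi$ is cycle equivalent to the fourientation $\psi$ obtained by reversing $e$. By Lemma~\ref{lem:reverse-once}, $\psi$ is then obtained from $\phi$ by reversing directed cycles that contain no 1-way edge other than $e$. Such a cycle contains $e$ and otherwise only 2-way edges, since 0-way edges cannot lie on directed cycles. Conversely, if $e$ is the unique 1-way edge on a directed cycle of $\phi$, then reversing that cycle flips only $e$, so the class has a loop. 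Note also that in this situation $\ov\phi$ is $e$-cyclic, and both of its blocks are used by the loop. By Lemma~\ref{lem:degree}, no other class is then adjacent to it, so the component is exactly that single vertex with its loop.

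\textbf{Ruling out longer cycles.} In a cycle of length $\ge 2$ every vertex has degree $2$. By Lemma~\ref{lem:degree}, every class on it is then $e$-cyclic, and each of its two blocks is adjacent to a block of a different class. I would attach a monotone invariant to a fourientation to exclude this. The natural candidate is the \emph{flow} across a fixed vertex cut. Pick a reference: let $e=\{x,y\}$. For a fourientation $\phi$ and a class, consider the quantity given by the net number of 1-way edges oriented out of a vertex set, or more simply the outdegree vector restricted to $x$. Reversing a directed cycle preserves the outdegree of every vertex, since 2-way edges are unchanged and 1-way edges along the cycle are shifted. So the outdegree of $x$ (counting 1-way edges, or a fixed convention for 2-way edges) is a class invariant. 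Moving across an $e$-adjacency from a block containing $\ra{e}$ (oriented $x\to y$) to one containing $\la{e}$ decreases $\mathrm{outdeg}(x)$ by exactly $1$.

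Now traverse the cycle $\ov\phi_1,\ov\phi_2,\ldots,\ov\phi_k,\ov\phi_1$. Each step leaves class $\ov\phi_i$ through one block and enters $\ov\phi_{i+1}$ through the other block type. Within a class, the two blocks $\ov\phi_{\ra{e}}$ and $\ov\phi_{\la{e}}$ share the same invariant. So I need care: at each class, we enter via one block and exit via the other. Suppose we enter $\ov\phi_i$ at its $\la{e}$-block, arriving from a $\ra{e}$-block of $\ov\phi_{i-1}$. Then we exit from its $\ra{e}$-block into a $\la{e}$-block of $\ov\phi_{i+1}$. Thus every step is an $\ra{e}\to\la{e}$ move, and $\mathrm{outdeg}(x)$ strictly decreases by $1$ along each step. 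This is impossible around a cycle.

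The main obstacle is checking the orientation consistency in this argument. One must confirm that adjacency always pairs an $\ra{e}$-block with a $\la{e}$-block, which holds by definition. One must also confirm that a degree-$2$ class uses both blocks, one for each neighbor, which follows from Lemma~\ref{lem:degree}. With these in place the traversal direction is forced, and the monotonicity gives the contradiction.
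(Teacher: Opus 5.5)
Your skeleton matches the paper's: a degree bound from Lemma~\ref{lem:degree}, loops characterized via Lemma~\ref{lem:reverse-once}, and a quantity that is constant on classes but shifts by one across each $e$-adjacency. Your loop characterization and block-level traversal are fine. The gap is the invariant itself.

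The number of 1-way edges leaving the single vertex $x$ is \emph{not} preserved by reversing a directed cycle once $x$ is incident to 2-way edges. No convention for counting 2-way edges repairs this. A directed cycle can enter $x$ along a 1-way edge and leave along a 2-way edge; the reversal flips the first and leaves the second alone. Concretely, suppose $\{x,z\}$ is 2-way, $w\to x$ is 1-way, and there is a directed path of 1-way edges from $z$ to $w$ avoiding $x$. Reversing the cycle $w\to x\to z\to\cdots\to w$ turns $w\to x$ into $x\to w$ and raises your count at $x$ by one. Since $S$ is arbitrary, and the induction on $S$ needs exactly this generality, your quantity is not a class invariant. So the step ``the two blocks of a class share the same invariant'' fails, and the telescoping around a cycle of $\mathcal{G}_S(A,B)$ yields no contradiction.

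Your first instinct, counting 1-way edges leaving a vertex \emph{set}, was the right one. The set just has to be compatible with the 2-way edges, which is what the paper achieves by contracting them.

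The set $T$ of 2-way edges is the same for all fourientations in a connected component of $\mathcal{G}_S(A,B)$, since neither cycle reversals nor $e$-adjacency alter it. Take $X$ to be the vertex set of the component of $(V,T)$ containing $x$; equivalently, use the outdegree of the vertex $u$ of $G/T$ containing $x$.
\begin{itemize}
\item No 2-way edge has exactly one endpoint in $X$, and 0-way edges lie on no directed cycle. So a directed cycle crosses the boundary of $X$ only along 1-way edges, alternately outward and inward, and reversing it preserves the number of 1-way edges leaving $X$.
\item Flipping $e$ changes this number by exactly one as long as $y\notin X$.
\item The case $y\in X$ happens precisely when $e$, together with a path of 2-way edges, forms a directed cycle in which $e$ is the only 1-way edge. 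That is exactly your loop case.
\end{itemize}
With this invariant your traversal argument goes through verbatim.
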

\begin{proof}
Consider a connected component $\gamma$ of the graph $\mathcal{G}_S(A,B)$, and let $\mU\subseteq \mV$ be the set of equivalence classes which are the vertices of $\gamma$. The set $T\subseteq S$ of 2-way edges is the same for every fourientation of every class in $\mU$. Let $(u,v)$ be the endpoints of the arc $\ra{e}$ in the contracted graph $G/T$.

Suppose first that $u=v$, which occurs when $\ra{e}$ is the only 1-way edge in a directed cycle of the fourientation in $\mU$. In this case, it is easy to see that $\mU$ is just one class with a loop in $\mathcal{G}_S(A,B)$. 

Suppose now that $u\neq v$. We want to show that the connected component $\gamma$ is not a cycle. For a fourientation $\phi$ in one of the classes in $\mU$, we define the \emph{$u$-outdegree} of $\phi$ to be the outdegree of the vertex $u$ (the number of 1-way edge with origin $u$) in the contracted fourientations $\phi/T$.
	For a class $\ov\phi\in \mU$, the $u$-outdegree is the same for every fourientation $\phi'$ in $\phi$, since it is unchanged by a cycle reversal. However, the $u$-outdegree will change by 1 when going from a class $\ov\phi \in\mU$ to an $e$-adjacent class (it increases by 1 when going from a class $\ov \phi$ to an $e$-adjacent class $\ov \psi$ obtained by replacing $\la{e}$ by $\ra{e}$). In fact the $u$-outdegree changes monotonously along a simple path of $\gamma$. Thus, $\gamma$ cannot contain a cycle. 
\end{proof}
	%
	%
	%

We will now prove that $|\mV|=|\mV'|$ by describing how the cycle equivalence classes change (merge and split) from $\mV$ to $\mV'$ when the 1-way arc $\ra{e}$ or $\la{e}$ is replaced with a solid edge $\ba{e}$ or $\ua{e}$. This is illustrated in Figure~\ref{fig:G-class-combined-ex2}. 
	
\fig{width = \textwidth}{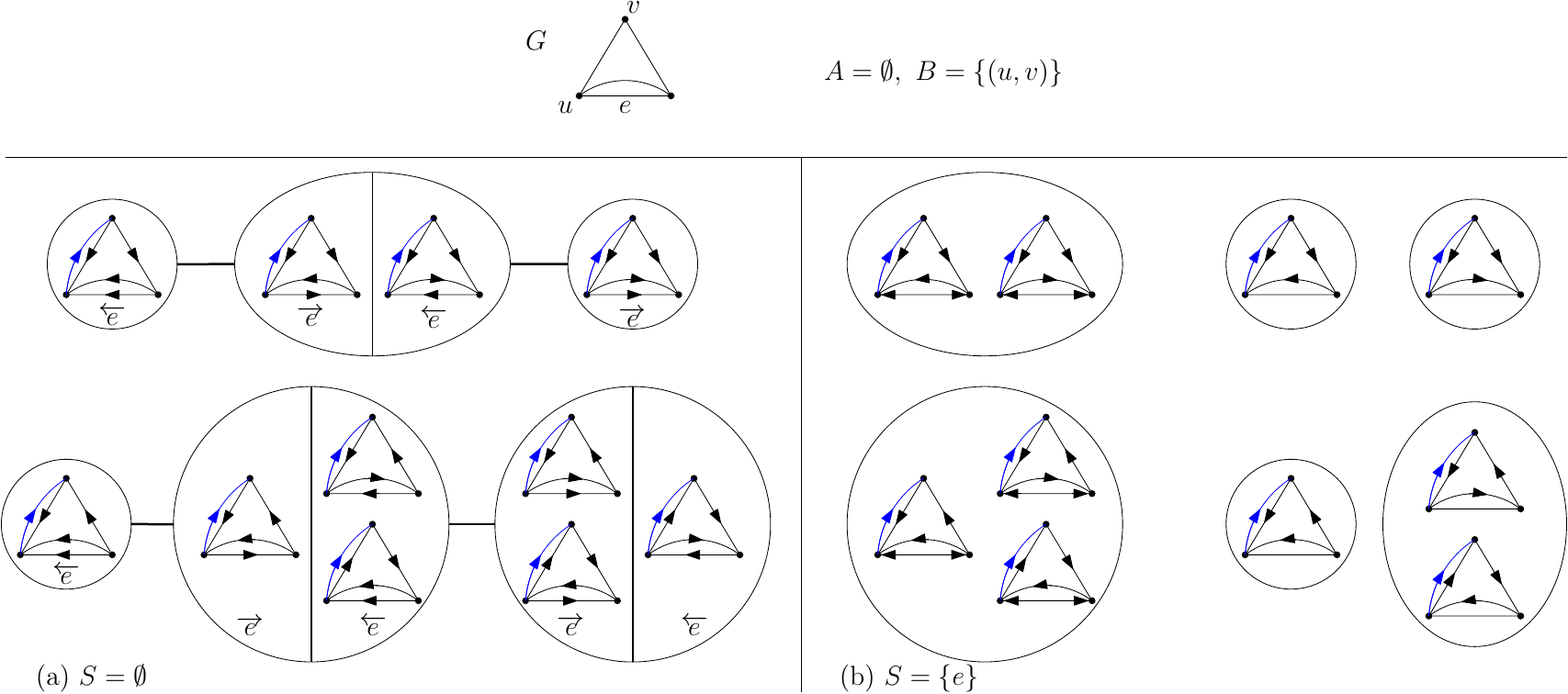}{Correspondence between the equivalences classes in $\mV$ and in $\mV'$. Here $S=\emptyset$, $A=\emptyset$, $B=\{(u,v)\}$. The cyclic constraint $(u,v)\in B$ is indicated by a blue arc, and the 0-way edges are indicated as line segments with no arrow.
(a) The graph $\mathcal{G}_S(A,B)$ whose vertex set $\mV$ correspond to the six cycle equivalence classes of $(A,B)$-valid orientations. The partition of $e$-cyclic equivalence classes into their blocks is indicated.
(b) The resulting six equivalence classes in $\mV'$. In this example, the two $\ba{e}$-classes corresponds to the two connected components of $\mathcal{G}_S(A,B)$, while the four $\ua{e}$-classes  correspond to the four edges of $\mathcal{G}_S(A,B)$.}

Let us start with establishing some vocabulary. Let us call \emph{$\ua{e}$-class} an equivalence class in $\mV'$ with where $e$ has the configuration $\ua{e}$. 
Given a $\mV$-block $B$, all the fourientations obtained from those in $B$ by giving $e$ the configuration $\ua{e}$ are all cycle equivalent. Hence, if they are $(A,B)$-valid, they belong to the same $\ua{e}$-class $C$, and in this case we say that the $\mV$-block $B$ is \emph{associated} to $C$. We define similarly the $\ba{e}$-classes and the $\mV$-blocks associated to an $\ba{e}$-class. Trivially, if some $\mV$-blocks $B,B'$ are $e$-adjacent, then $B$ is associated to a class $C\in \mV'$ if and only if $B'$ is associated to the class $C$.

%
%
%
%

By \eqref{eq:suffice2}, for any class in $\mV'$ there is at least one $\mV$-block associated to it. We give a more precise result.

\begin{lemma}\label{lem:block-association}
Each $\ua{e}$-class $C\in \mV'$ is associated either to a single $\mV$-block or to a pair of $e$-adjacent $\mV$-blocks.
Each $\ba{e}$-class $C\in \mV'$ is associated to a set of $\mV$-blocks $\{B_1,\ldots,B_n\}$, where for all $i\in [n-1]$ the $\mV$-blocks $B_{i},B_{i+1}$ are either in the same $e$-cyclic class or are $e$-adjacent.
\end{lemma}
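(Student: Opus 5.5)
The plan is to use Lemma~\ref{lem:reverse-once} to turn cycle equivalences inside $\mV'$ into cycle equivalences of fourientations in which $e$ carries a 1-way configuration. Throughout, fix a class $C\in\mV'$ and two fourientations $\psi,\psi'\in C$ with associated blocks $B\ni\phi\cup c$ and $B'\ni\phi'\cup c'$, where $\psi=\phi\cup\ua{e}$ (or $\phi\cup\ba{e}$) and $c,c'\in\{\ra{e},\la{e}\}$. By Lemma~\ref{lem:reverse-once}, $\psi'$ is obtained from $\psi$ by reversing directed cycles $C_1,\dots,C_n$ that pairwise share no edge. None of these cycles uses $e$ as a 1-way edge, since $e$ is solid in $\psi$.

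\textbf{The $\ua{e}$ case.} Here no $C_i$ uses $e$ at all. The same cycles are therefore directed cycles of $\phi\cup c$, and reversing them sends $\phi\cup c$ to $\phi'\cup c$. So $\phi\cup c$ and $\phi'\cup c$ are cycle equivalent. Recall that $(A,B)$-validity is preserved by cycle reversals (Lemma~\ref{lem:ABequiv}), and that $\phi\cup c$ is valid because it lies in $B$. Hence $\phi'\cup c$ is valid and lies in $B$. The blocks of $\phi'$ are those of $\phi'\cup\ra{e}$ and $\phi'\cup\la{e}$ (whichever are valid). Consequently every block associated to $C$ is either $B$, or equals $(B)$ with $e$ reversed, which is $e$-adjacent to $B$. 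This is the case where $c'\neq c$, and Lemma~\ref{lem:degree} says such a $e$-adjacent block is unique. It follows that $C$ is associated to one block, or to a pair of $e$-adjacent blocks.

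\textbf{The $\ba{e}$ case.} Now the $C_i$ may traverse $e$ in either direction, since $e$ is 2-way. I would induct along the sequence $C_1,\dots,C_n$, tracking a 1-way configuration of $e$ and producing the required chain of blocks. Suppose the current fourientation is $\chi\cup\ba{e}$, and we hold a valid lift $\chi\cup d$ lying in some block. Reversing a cycle $C_i$ falls into two cases.
\begin{compactitem}
\item If $C_i$ avoids $e$, or uses $e$ in direction $d$, then $C_i$ is a directed cycle in $\chi\cup d$. Reversing it in $\chi\cup d$ stays in the same class. One subtlety: if $C_i$ used $e$, then $e$ flips to $-d$, so we pass between the two blocks of the same $e$-cyclic class.
\item If $C_i$ uses $e$ in direction $-d$, first switch $\chi\cup d$ to $\chi\cup(-d)$ and then reverse as in the first case.
\end{compactitem}

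The main obstacle is the switch in the second case. It moves to an $e$-adjacent block only if $\chi\cup(-d)$ is $(A,B)$-valid. To establish this, I would argue as follows. In $\chi\cup\ba{e}$ the arc $-d$ lies on the directed cycle $C_i$, and the remaining arcs of $C_i$ are arcs of $\chi$ or 2-way edges. Hence $-d$ is cyclic in $D(-d)$, so $e$ lies in a directed cycle of $\chi\cup(-d)$. For validity, I would mimic Lemma~\ref{lem:upgrade}. Because $-d$ is cyclic, $D(-d)$ has the same strongly connected components as $D(\ba{e})$, and $\chi\cup\ba{e}$ is valid by Lemma~\ref{lem:ABequiv}. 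Therefore the arcs of $A$ stay acyclic and the arcs of $B$ stay cyclic, and $\chi\cup(-d)$ is valid.

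\textbf{Conclusion.} Each step moves between consecutive blocks that are either in the same $e$-cyclic class or $e$-adjacent, and each block visited is associated to $C$. Ending at $\psi'$, we reach a block of $\phi'$. If that block is not $B'$, it is $B'$ with $e$ reversed, and one further $e$-adjacent step (valid by the same argument) reaches $B'$. Applying this to all pairs, starting from a fixed block, orders all associated blocks into the required chain $B_1,\dots,B_n$ via concatenated walks, listing each block when first visited.
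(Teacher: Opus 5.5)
This is essentially the paper's proof, and you correctly supply the step the paper dismisses as ``easily implies'': for a $\ua{e}$-class no directed cycle can use $e$, so fixing $e$ to $c$ preserves cycle equivalence, and for a $\ba{e}$-class you lift a cycle-reversal sequence inside $C$, where the switch to $\chi\cup(-d)$ is valid because $-d$ is cyclic, so $D(-d)$ and $D(\ba{e})$ have the same strongly connected components, as in Lemma~\ref{lem:upgrade}. Only your last sentence is off: listing blocks by first visit along concatenated walks need not make consecutive blocks related (a walk $Y,X,Y,Z$ gives $Y,X,Z$), so what you and the paper actually prove is that the associated blocks are connected under the relation, which is all that is used later, and for the literal ordering note that each block is related to at most two others (the other block of its class and, by Lemma~\ref{lem:degree}, at most one $e$-adjacent block), so the associated blocks form a path or cycle and can be listed consecutively.
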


\begin{proof}
Consider first a $\ua{e}$-class $C\in\mV'$. The fourientations obtained from those in $\ov\phi$ by replacing $\ua{e}$ by $\ra{e}$ (resp. $\la{e}$) are all cycle equivalent, thus they belong to a single $\mV$-block. Hence, $C$ is either associated to a single $\mV$-block, or a pair of $e$-adjacent $\mV$-blocks.

Consider now a $\ba{e}$-class $C\in\mV'$. Let $B,B'$ be $\mV$-blocks associated to $C$. Let $\phi\in B$ and $\phi'\in B'$. By definition the fourientations $\psi,\psi'$ obtained from $\phi,\phi'$ by giving $e$ the configuration $\ba{e}$ are cycle equivalent. Hence, there exists a sequence $\psi_1=\psi,\psi_2,\ldots,\psi_k=\psi'$ of fourientations such that for all $i$, $\psi_{i+1}$ is obtained from $\psi_i$ by reversing a directed cycle. This easily implies that there exists a sequence $\phi_1=\phi,\phi_2,\ldots,\phi_\ell=\phi'$ of fourientations where for all $i$, $\phi_i$ and $\phi_{i+1}$ are in the same $\mV$-block, or in different $\mV$-blocks of the same $e$-cyclic class, or in $e$-adjacent $\mV$-blocks. This proves the stated result.
\end{proof}

By Lemma~\ref{lem:block-association}, the set of blocks associated to a given class in $\mV'$ are all part of the same connected component of the graph $\mG_S(A,B)$. We now consider a connected component $\ga$ of $\mathcal{G}_S(A,B)$, and proceed to count the classes in $\mV'$ associated with the blocks in $\gamma$. 

Consider first the case where the connected component $\gamma$ is an isolated vertex $\ov\phi\in \mV$ incident to a loop. By Corollary~\ref{cor:path}, the edge $e$ is the only 1-way edge in a directed cycle of the fourientations in $\ov\phi$. Since cycles of 2-way edges are disallowed for $(A,B)$-valid cycle classes (see Definition~\ref{def:valid-classes}(i)), only the configuration $\ua{e}$ is possible to get a class in $\mV'$ (although the configurations $\ua{e}$ and $\ba{e}$ both give $(A,B)$-valid fourientations by~\eqref{eq:suffice2}). Hence, in this case the blocks in $\gamma$ are associated to a single class in $\mV'$.

We now consider a connected component $\gamma$ which is a path. Let $B$ be a $\mV$-block in $\gamma$, and let $\phi$ be a fourientation in $B$. By Corollary~\ref{cor:path}, the edge $e$ is not the only 1-way edge in a directed cycle of $\phi$. Hence, by~\eqref{eq:suffice2}, the block $B$ is associated to two classes in $\mV'$ if $B$ is $e$-adjacent to another $\mV$-block, and $B$ is associated to a single class in $\mV'$ otherwise.


Suppose that the path $\gamma$ has $k\geq 2$ vertices. As explained above, the $\mV$-blocks in $\gamma$ incident to an edge of $\mathcal{G}_S(A,B)$ are associated to both a $\ua{e}$-class and a $\ba{e}$-class. In fact, it follows from Lemma~\ref{lem:block-association} that all the blocks in $\gamma$ are associated to the same $\ba{e}$-class, while each pair of $e$-adjacent blocks is associated to a distinct $\ua{e}$-class. Thus, the blocks in $\gamma$ are associated to 1 $\ba{e}$-class and $(k-1)$ $\ua{e}$-classes. This is illustrated in Figure~\ref{fig:G-class-combined-ex2}.

Lastly, if the path $\gamma$ has a single vertex, then the $\mV$-blocks in $\gamma$ are associated to a single class in $\mV'$ (precisely, a $\ba{e}$-class if the class in $\gamma$ is $e$-cyclic and a $\ua{e}$-class otherwise, by Lemma~\ref{lem:upgrade}). 

In every case, the number of classes in $\mV'$ associated to the blocks in $\gamma$ is equal to the number of vertices of $\gamma$. This proves $|\mV|=|\mV'|$, and completes the proof of Theorem~\ref{thm:eqClass}(i).

\subsection{Proof of of Theorem~\ref{thm:eqClass} for cocycle equivalence classes}	
We now focus on proving Theorem~\ref{thm:eqClass}(ii), following a similar strategy as for Theorem~\ref{thm:eqClass}(i). We fix a subset $S\subset E$ and an edge $e\in E\setminus S$ and we let $\mV := \Four_{G,S}(A,B)/\coc$ and $\mV' := \Four_{G,S\cup \lbrace e\rbrace}(A,B)/\coc$. We need to show that $|\mV|=|\mV'|$. 

We use the same terminology as before regarding the classes in $\mV$ and their block ($e$-cyclic, $e$-acyclic, $e$-adjacent). We let $\mathcal{G}'_S(A,B)$ be the graph with vertex set $\mV$, where two classes in $\mV$ are connected by an edge if and only if they are $e$-adjacent. We make a couple of observations, which are analogous to the case of cycle equivalence.
	\begin{lemma}\label{lem:degree-ii}
	Every $\mV$-block is $e$-adjacent to at most one other $\mV$-block. Consequently, every $e$-cyclic (resp. $e$-acyclic) equivalence class $\overline{\phi}$ in $\mV$ has degree at most 1 (resp. 2) in~$\mathcal{G}_S'(A,B)$.
	\end{lemma}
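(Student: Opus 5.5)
We mirror the proof of Lemma~\ref{lem:degree}, replacing cycle reversals by $(A,B)$-cocycle reversals and using the cocycle version of Lemma~\ref{lem:reverse-once}.

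Let $B$ be a $\mV$-block and take $\phi,\phi'\in B$. Let $\psi,\psi'$ be obtained from $\phi,\phi'$ by reversing $e$. Since $\phi$ and $\phi'$ are cocycle equivalent and have the same orientation of $e$, Lemma~\ref{lem:reverse-once} shows that $\phi$ is obtained from $\phi'$ by successively reversing directed $(A,B)$-cocycles $C_1,\ldots,C_k$, none of which contains $e$.

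The key step is to check that the same sequence $C_1,\ldots,C_k$ can be applied to $\psi'$. Each $C_i$ is determined by a vertex partition $(V_1,V_2)$. In the intermediate fourientation, no 1-way arc or arc of $A\cup B$ goes from $V_2$ to $V_1$. Also, no 2-way edge crosses the cut, and $C_i$ contains no arc of $A\cup B$. Since $e\notin C_i$, either $e$ does not cross the cut, or $e$ is 0-way. The second option is impossible, because $e$ is 1-way in every fourientation of $\mV$. Hence $e$ does not cross the cut $(V_1,V_2)$, so its orientation is irrelevant to whether $C_i$ is a directed $(A,B)$-cocycle. Therefore $C_i$ is still a directed $(A,B)$-cocycle of the corresponding intermediate fourientation with $e$ reversed.

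By induction, $\psi$ is obtained from $\psi'$ by the same reversals, so $\psi$ and $\psi'$ are cocycle equivalent. If they are $(A,B)$-valid, they lie in a single $\mV$-block. So reversing $e$ maps $B$ into one block, and $B$ is $e$-adjacent to at most one other $\mV$-block. The only point needing care is that the cocycles avoid $e$ as a cut edge, not just as an arc, and this is exactly what the argument above establishes.

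For the degree bound, recall that cocycle reversals only flip acyclic 1-way edges. Hence if $e$ is cyclic in $\phi$, the orientation of $e$ is the same throughout $\ov\phi$, so an $e$-cyclic class consists of a single block and has degree at most $1$. An $e$-acyclic class has at most two blocks, namely $\ov\phi_{\ra{e}}$ and $\ov\phi_{\la{e}}$. Each block contributes at most one adjacency, so the class has degree at most $2$ in $\mathcal{G}'_S(A,B)$.
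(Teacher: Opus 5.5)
Your proof is correct and takes the paper's route: the paper only says the argument of Lemma~\ref{lem:degree} carries over via the cocycle version of Lemma~\ref{lem:reverse-once}, and that is what you carry out before counting blocks ($e$-cyclic classes have a single block because cocycle reversals never flip a cyclic edge, $e$-acyclic classes have at most two). Your explicit check that $e$ cannot cross the cut defining any $C_i$ (a crossing 1-way edge would have to lie in $C_i$), so that the same reversals still apply after flipping $e$, fills in a step the paper leaves implicit.
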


Lemma~\ref{lem:degree-ii} can be proved analogously to Lemma~\ref{lem:degree} since Lemma~\ref{lem:reverse-once} still implies that each $\mV$-block is $e$-adjacent to at most one other $\mV$-block. 
It follows from Lemma~\ref{lem:degree-ii} that each connected component of $\mathcal{G}_S'(A,B)$ is either a path or a cycle. One can actually be a bit more precise: 	

	\begin{cor}\label{cor:path-ii}
		Each connected component of $\mathcal{G}'_S(A,B)$ is either a path or a cycle of length 1 (that is, a vertex $\overline{\phi}\in \mV$ with a loop).
		A class $\ov\phi\in \mV$ is in a cycle of length 1 in $\mathcal{G}'_S(A,B)$ if and only if $e$ is the unique 1-way edge in a directed $(A,B)$-cocycle of the fourientation $\phi$ (all the other edges on that cocycle being 0-way edges).
	\end{cor}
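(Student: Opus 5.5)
The proof dualizes that of Corollary~\ref{cor:path}, with the roles of contraction and deletion exchanged. Let $\gamma$ be a connected component of $\mathcal{G}'_S(A,B)$ and $\mU\subseteq\mV$ its vertex set. Cocycle reversals and reversals of $e$ change no edge between 0-way, 1-way and 2-way. Hence the set $R\subseteq S$ of 0-way edges and the set $T\subseteq S$ of 2-way edges are common to all fourientations in all classes of $\mU$. Following the proof of Lemma~\ref{lem:reverse-once}, I will work in the digraph $G'=(G\setminus R/T)\cup A\cup B$, viewed with the arcs of $A\cup B$ fixed. In $G'$, directed $(A,B)$-cocycles of $\phi$ correspond to directed cocycles avoiding $A\cup B$. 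The dichotomy is whether the edge $e$ is a coloop (isthmus) of the underlying undirected graph $G\setminus R\cup\un{A}\cup\un{B}$, after the 2-way edges are contracted. Being a coloop is exactly the statement that $\{e\}$ alone forms an $(A,B)$-cocycle once the 0-way edges are ignored, i.e.\ $e$ is the unique 1-way edge of a directed $(A,B)$-cocycle.

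\textbf{Coloop case.} If $e$ is such a coloop, then $\{\ra{e}\}$, with all other edges of the cut 0-way, is itself a directed $(A,B)$-cocycle. Reversing it swaps $\ra{e}$ and $\la{e}$ inside the same cocycle class. Therefore every $e$-adjacency stays inside a single class, which gives a loop. By Lemma~\ref{lem:degree-ii}, this class has no other neighbor. Indeed, $e$ is acyclic, so the class has a single block up to this reversal, and each block is $e$-adjacent to at most one block, here itself. So $\gamma$ is a single vertex carrying a loop. Conversely, if a class has a loop, some $\phi$ and $\phi$ with $e$ reversed are cocycle equivalent. By Lemma~\ref{lem:reverse-once}, the reversal is achieved by disjoint directed $(A,B)$-cocycles containing no 1-way edge common to both, and here the only differing edge is $e$. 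So some directed $(A,B)$-cocycle has $e$ as its unique 1-way edge, and its other edges are 0-way.

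\textbf{Non-coloop case.} Here I must show that $\gamma$ contains no cycle. The invariant dual to $u$-outdegree is a circulation count along a fixed cycle. Since $e$ is not a coloop of $G\setminus R$ with $T$ contracted and $A\cup B$ added, there is an undirected cycle $Z$ through $e$ in this graph. I want $Z$ to avoid $A\cup B$, and I expect this to be the main obstacle. Choose $Z$ in $G\setminus R/T$ if possible; such a $Z$ exists unless $e$ is a coloop of $G\setminus R/T$ alone. Fix a traversal direction on $Z$, with $\ra{e}$ forward. For each $\phi$ in a class of $\mU$, define $w(\phi)$ as the number of 1-way edges of $Z$ oriented forward. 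A directed cocycle meets a cycle in equally many forward and backward arcs. Thus reversing a directed $(A,B)$-cocycle, which contains no arc of $A\cup B$ anyway, preserves $w$, so $w$ is constant on each class. Replacing $\la{e}$ by $\ra{e}$ raises $w$ by $1$, and along a simple path of $\gamma$ the change is monotone, exactly as in Corollary~\ref{cor:path}. So $\gamma$ contains no cycle of length at least $2$. It also has no loop, since the two fourientations related by reversing $e$ have different $w$.

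\textbf{Remaining subcase.} It remains to handle the case where every such cycle $Z$ must use an arc of $A\cup B$, so that $e$ is a coloop of $G\setminus R/T$ but not after adding $\un{A}\cup\un{B}$. I would treat this directly. The cut separating the two sides of $e$ in $G\setminus R/T$ contains only $e$ among the edges of $G$, plus some arcs of $A\cup B$. Any directed $(A,B)$-cocycle avoids $A\cup B$, and by Lemma~\ref{lem:reverse-once} reversals may be taken to flip no common 1-way edge. I then need to show that a reversal never flips $e$ alone, which follows because $\{e\}$ is not an $(A,B)$-cocycle here. I would still argue monotonicity by using the cycle through arcs of $A\cup B$, whose arcs are never reversed, and again counting forward 1-way edges of $G$ on $Z$. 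The equal forward/backward intersection property still holds for cocycles of $G'$ avoiding $A\cup B$. This uniform argument actually covers both subcases at once, so I would present it that way.
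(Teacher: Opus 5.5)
Your proposal is correct and is essentially the paper's proof: it uses the dichotomy on whether $e$ is an isthmus of $(G\setminus R)\cup\underline{A}\cup\underline{B}$. In the non-isthmus case it counts the forward 1-way edges along a cycle through $e$ (the paper's $C$-flow); this count is preserved by $(A,B)$-cocycle reversals and shifts by one when $e$ is flipped. Worrying about making $Z$ avoid $A\cup B$, and contracting $T$, are unnecessary detours, since the paper takes $C$ to be any cycle of that graph from the outset (the uniform argument you settle on). Also, in the isthmus case the two blocks of the class are $e$-adjacent to each other, not a block to itself, though your conclusion is unaffected.
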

	
\begin{proof}
Consider a connected component $\gamma$ of the graph $\mathcal{G}'_S(A,B)$. Clearly the set $R\subseteq S$ of 0-way edges is the same for every fourientation $\phi$ in every class $\ov \phi$ in~$\gamma$. We define $G'=(G\setminus R)\cup \un{A}\cup\un{B}$ as the graph obtained from $G$ by deleting the edges in $R$ and adding the edges in $\{\{u,v\}\mid (u,v)\in A\cup B\}$.

Suppose first that the edge $e$ is an isthmus in $G'$. In this case $e$ belongs to a directed $(A,B)$-cocycle of $G$ containing no other 1-way edge. Hence, the connected component $\ga$ is made of a single equivalence $\ov\phi$ with a loop incident to it.

Suppose now that the edge $e$ is not an isthmus in $G'$. We want to show that the connected component $\gamma$ of $\mathcal{G}'_S(A,B)$ is not a cycle. Consider a cycle $C$ of $G'$ containing $e$. We designate a canonical direction for the cycle $C$, and for any fourientation $\phi$ of $G$, we call the \emph{$C$-flow} of $\phi$ the number of 1-way edges of $\phi$ on $C$ whose orientation coincides with the canonical orientation of $C$. We observe that, for any fourientation $\phi$ in a class in $\gamma$, the $C$-flow is unchanged by the reversal of any directed cocycle $D$ of $\phi$. This is because $D$ must intersect the cycle $C$ an even number of times (always at 1-way edges), and the number of 1-way edges in $D$ oriented in agreement with the canonical orientation of $C$ is equal to the number of 1-way edges in $D$ oriented in disagreement with the canonical direction of $C$. Thus, the $C$-flow is the same for every fourientation in the cocycle class $\overline{\phi}$. However, the $C$-flow will change by 1 when going from a class $\overline{\phi}$ in $\gamma$ to an $e$-adjacent class. In fact, the $C$-flow will change monotonously along a simple path of $\mathcal{G}'_S(A,B)$. This shows that the connected component $\gamma$ is not a cycle, as wanted.
	\end{proof}

The proof of Theorem~\ref{thm:eqClass}(ii) is analogous to the proof of Theorem~\ref{thm:eqClass}(i). Here are the main differences. Corollary~\ref{cor:path} replaces Corollary~\ref{cor:path-ii}, and Lemma~\ref{lem:block-association-ii} is replaced by the following:
\begin{lemma}\label{lem:block-association-ii}
Each $\ba{e}$-class $C\in \mV'$ is associated either to a single $\mV$-block or a pair of $e$-adjacent $\mV$-blocks.
Each $\ua{e}$-class $C\in \mV'$ is associated to a set of $\mV$-blocks $\{B_1,\ldots,B_n\}$, where for all $i\in [n-1]$ the blocks $B_{i},B_{i+1}$ are either in the same $e$-acyclic class or are $e$-adjacent.
\end{lemma}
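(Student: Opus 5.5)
We follow the proof of Lemma~\ref{lem:block-association}, swapping the roles of $\ua{e}$ and $\ba{e}$ and of cycles and $(A,B)$-cocycles. The guiding remark is that in the cocycle setting the 2-way configuration $\ba{e}$ plays the rigid role. An $(A,B)$-cocycle of a fourientation in which $e$ is 2-way cannot separate the endpoints of $e$, so it never passes through $e$. In contrast, when $e$ is 0-way it is absent from $\vG(A,B;\cdot)$, and cocycles may separate its endpoints.

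\textbf{The $\ba{e}$-classes.} Let $C\in\mV'$ be a $\ba{e}$-class, and let $\psi,\psi'\in C$. By Lemma~\ref{lem:reverse-once}, $\psi'$ is obtained from $\psi$ by reversing directed $(A,B)$-cocycles, and none of these contains $e$, since $e$ is 2-way. Now replace $\ba{e}$ by $\ra{e}$ in every fourientation along this sequence. Each reversed cocycle remains a directed $(A,B)$-cocycle: the partition $V=V_1\cup V_2$ puts both endpoints of $e$ on the same side, so adding or removing arcs of $e$ changes no crossing arcs. Hence the fourientations obtained from $C$ by giving $e$ configuration $\ra{e}$ are pairwise cocycle equivalent. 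Whenever they are $(A,B)$-valid, they therefore lie in a single $\mV$-block, and the same holds for $\la{e}$. So $C$ is associated to at most one $\ra{e}$-block and at most one $\la{e}$-block. If it is associated to both, these blocks are $e$-adjacent, because they contain fourientations differing only in the orientation of $e$.

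\textbf{The $\ua{e}$-classes.} Let $C\in\mV'$ be a $\ua{e}$-class, and let $B,B'$ be $\mV$-blocks associated to $C$, with $\phi\in B$ and $\phi'\in B'$. The corresponding $\psi,\psi'$ (with $\ua{e}$) are joined by a sequence $\psi_1=\psi,\ldots,\psi_k=\psi'$ in which consecutive terms differ by reversing a directed $(A,B)$-cocycle $D_i$. We lift each step by assigning $e$ an orientation $\ra{e}$ or $\la{e}$, choosing the lift to be valid where possible, using \eqref{eq:suffice2} and Lemma~\ref{lem:upgrade}.

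If $D_i$ does not separate the endpoints of $e$, then $D_i$ is still a directed $(A,B)$-cocycle after adding either orientation of $e$. The step therefore lifts to a step inside one $\mV$-block.

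If $D_i$ separates the endpoints of $e$, orient $e$ in the same direction as $D_i$. Then $D_i\cup e$ is a directed $(A,B)$-cocycle, so $e$ is acyclic and the class is $e$-acyclic. Reversing $D_i\cup e$ gives a fourientation in the same $e$-acyclic class but in the other block, since $e$ has been reversed. Such a move is allowed by the statement, because both blocks lie in the same $e$-acyclic class. Alternatively, one can first flip $e$ alone, which is an $e$-adjacency move.

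At the endpoints of the sequence we may need to switch the orientation of $e$ to match $\phi$ and $\phi'$. Such a switch is an $e$-adjacency, provided both orientations give valid fourientations. Concatenating all of these moves produces a chain $B=B_1,\ldots,B_n=B'$ of blocks in which consecutive blocks are either in the same $e$-acyclic class or $e$-adjacent.

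\textbf{Main obstacle.} The delicate point is validity: every lifted fourientation must be $(A,B)$-valid, so that it really belongs to a $\mV$-block. Validity is determined only by which arcs are cyclic or acyclic, so it is preserved by cocycle reversals (Lemma~\ref{lem:ABequiv}). For the $\ua{e}$ case, when $e$ is given the orientation agreeing with a separating cocycle, it is acyclic, and Lemma~\ref{lem:upgrade} together with Claim~2 shows that this orientation gives a valid fourientation whenever the $\ua{e}$ one is valid. If validity fails for one orientation in a non-separating step, I would choose the other orientation of $e$. Claim~2 guarantees that at least one orientation is valid, and switching between the two orientations is an $e$-adjacency.
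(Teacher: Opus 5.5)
Your proposal is correct and follows the paper's route, which is the proof of Lemma~\ref{lem:block-association} transposed to cocycles. For the $\ba{e}$-classes you use the same rigidity (no directed $(A,B)$-cocycle can cross a 2-way edge, so the $\ra{e}$- and $\la{e}$-replacements stay cocycle equivalent), and for the $\ua{e}$-classes you lift a sequence of cocycle reversals, spelling out the paper's ``easily implies'' step through the separating/non-separating dichotomy.

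One small correction: validity of the orientation $c$ of $e$ that agrees with a separating $D_i$ does not follow from Lemma~\ref{lem:upgrade} and Claim~2. Those give only the converse direction and the existence of some valid orientation. It follows directly because $c$ lies in a directed cocycle, so $D(c)$ and $D(\ua{e})$ have exactly the same directed cycles.
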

The proof of Lemma~\ref{lem:block-association-ii} is the same (mutatis mutandis) as that of Lemma~\ref{lem:block-association}. By combining Equation \eqref{eq:suffice2}, Corollary~\ref{cor:path-ii} and Lemma~\ref{lem:block-association-ii}, one can then show that each connected component $\gamma$ of $\mG_S'(A,B)$ with $k$ classes of $\mV$ gives raise to $k$ classes in $\mV'$. This can be done by considering separately three different cases.
\begin{compactitem}
\item If $\gamma$ is an isolated vertex $\ov\phi\in \mV$ incident to a loop, then the edge $e$ is the only 1-way edge in a directed $(A,B)$-cocycle of the fourientations $\psi\in \ov\phi$. Since an entire $(A,B)$-cocycle of 0-way edges is disallowed for $(A,B)$-valid classes, we may only replace $e$ with the configuration $\ba{e}$, thus the blocks in $\gamma$ are associated with a single class in $\mV'$.
\item If $\gamma$ is a path with $k\geq 2$ vertices, then the blocks in $\gamma$ are associated to 1 $\ua{e}$-class and $k-1$ $\ba{e}$-class (one for each edge of $\gamma$).
\item If $\gamma$ is a path with a single vertex, then the blocks in $\gamma$ are associated to a single class of $\mV'$.
\end{compactitem}
In every case, the number of classes in $\mV'$ associated to the blocks in $\gamma$ is equal to the number of vertices of $\gamma$.
This proves $|\mV|=|\mV'|$, hence Theorem~\ref{thm:eqClass}(ii).

\subsection{Proof of Theorem~\ref{thm:eqClass} for cycle-cocycle equivalence classes}	
	We now prove Theorem~\ref{thm:eqClass}(iii). We fix a subset $S\subset E$ and an edge $e\in E\setminus S$ and we let $\mV := \Four_{G,S}(A,B)/\cc$, and $\mV' := \Four_{G,S\cup\lbrace e\rbrace}(A,B)/\cc$. We define $\mV$-blocks and $e$-adjacency as before. Let $\mG_S''(A,B)$ be the graph with vertex set $\mV$ with edges corresponding to $e$-adjacency. We observe that each class in $\mV$ contains two $\mV$-blocks. 

	\begin{lemma}\label{lem:degree-iii}
	Every $\mV$-block is $e$-adjacent to at most one other $\mV$-block. Consequently, every equivalence class $\overline{\phi}$ in $\mV$ has degree at most 2 in the graph $\mathcal{G}_S''(A,B)$.
	\end{lemma}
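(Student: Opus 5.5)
The plan is to follow the proof of Lemma~\ref{lem:degree}, now using the cycle-cocycle case of Lemma~\ref{lem:reverse-once}.

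\textbf{Step 1: a block is $e$-adjacent to at most one block.} Let $B$ be a $\mV$-block and take $\phi,\phi'\in B$. Let $\psi,\psi'$ be obtained from $\phi,\phi'$ by reversing $e$. Since $\phi$ and $\phi'$ are cycle-cocycle equivalent and give $e$ the same orientation, Lemma~\ref{lem:reverse-once} (cycle-cocycle case) says that $\phi$ is obtained from $\phi'$ by reversing directed cycles and directed $(A,B)$-cocycles, none of which contains $e$.

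Each of these cycles or cocycles avoids $e$, so it stays a directed cycle, resp. a directed $(A,B)$-cocycle, after the orientation of $e$ is flipped. For cycles this is clear. For a cocycle $D$ given by a partition $V=V_1\cup V_2$, the edge $e$ is not in $D$, so its endpoints lie on the same side. Flipping $e$ therefore creates no arc from $V_2$ to $V_1$ and leaves the cut unchanged. The arcs of $A\cup B$ are untouched, so $D$ still contains none of them.

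Applying the same sequence of reversals to $\psi'$ step by step yields $\psi$. Hence $\psi$ and $\psi'$ are cycle-cocycle equivalent. If they are $(A,B)$-valid (Lemma~\ref{lem:ABequiv} makes validity a class property), they lie in the same class. They also carry the same orientation of $e$, so they lie in the same $\mV$-block. Thus reversing $e$ sends $B$ into a single $\mV$-block, and $B$ is $e$-adjacent to at most one other block.

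\textbf{Step 2: the degree bound.} Each class in $\mV$ consists of at most two blocks, its $\ra{e}$-part and its $\la{e}$-part. Every neighbour of $\ov\phi$ in $\mathcal{G}''_S(A,B)$ arises from an $e$-adjacency of one of these blocks, so $\ov\phi$ has degree at most $2$.

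\textbf{Main obstacle.} The only delicate point is the claim that the $(A,B)$-cocycles used in the reversal sequence remain directed $(A,B)$-cocycles after flipping $e$. This is exactly where we need Lemma~\ref{lem:reverse-once} to guarantee that no reversed cycle or cocycle contains $e$; the side argument in Step 1 then settles it.
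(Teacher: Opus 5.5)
Your proof is correct and follows the paper's route. The paper proves this lemma "analogously to Lemma~\ref{lem:degree}" via the cycle-cocycle case of Lemma~\ref{lem:reverse-once}: flipping $e$ sends a block into a single block, and the degree is then at most the number of blocks per class. Your explicit check that a directed $(A,B)$-cocycle avoiding $e$ stays one after $e$ is flipped, and your careful ``at most two blocks'' per class, fill in details the paper leaves implicit.
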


Lemma~\ref{lem:degree-iii} can be proved analogously to Lemma~\ref{lem:degree} based on Lemma~\ref{lem:reverse-once}. It implies that each connected component of the graph $\mathcal{G}_S''(A,B)$ must be a path or a cycle. We will now prove the following more precise statement.	

\begin{cor}\label{cor:path-iii}
		Each connected component of $\mathcal{G}''_S(A,B)$ is either a path or a cycle. 
Each cycle of $\mathcal{G}''_S(A,B)$ of length greater than 1 contain both $e$-cyclic classes and $e$-acyclic classes.
A class $\ov\phi\in \mV$ is in a cycle of length 1 in $\mathcal{G}_S''(A,B)$ if and only if $e$ is the unique 1-way edge in a directed cycle of $\phi$ (all the other edges on that cycle being 2-way edges) or the unique 1-way edge in a directed $(A,B)$-cocycle of $\phi$ (all the other edges on that cocycle being 0-way edges) . 
\end{cor}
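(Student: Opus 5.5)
The first assertion (components are paths or cycles) follows directly from Lemma~\ref{lem:degree-iii}: every class has degree at most 2. The remaining work is to identify the cycles of length 1 and to show that a longer cycle must mix $e$-cyclic and $e$-acyclic classes. I would fix a connected component $\gamma$ and note that the set $R$ of 0-way edges and the set $T$ of 2-way edges are the same for all fourientations in all classes of $\gamma$. All arguments then take place in the contracted/deleted graph $G'=((G\setminus R)/T)\cup \un{A}\cup\un{B}$.

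\emph{Loops.} Suppose $e$ is a loop of $(G\setminus R)/T$, i.e.\ the unique 1-way edge on a directed cycle whose other edges are 2-way. Then reversing that cycle just reverses $e$, so $\phi$ and its $e$-reversal lie in the same class, and $\ov\phi$ carries a loop. Similarly, suppose $e$ is an isthmus of $G'$, i.e.\ the unique 1-way edge of an $(A,B)$-cocycle whose other edges are 0-way. Then reversing that cocycle reverses only $e$, again giving a loop.

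Conversely, suppose $\ov\phi$ has a loop: some $\phi'\in\ov\phi$ is cycle-cocycle equivalent to its $e$-reversal. By Lemma~\ref{lem:reverse-once}, the reversal is achieved by reversing directed cycles and $(A,B)$-cocycles that avoid every 1-way edge on which the two fourientations agree. Hence it is achieved by a single directed cycle or cocycle whose only 1-way edge is $e$, which is exactly the stated condition. This also shows that such a class is an isolated vertex with a loop.

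\emph{Longer cycles.} Suppose $\gamma$ is a cycle of length at least 2 and all its classes are $e$-cyclic. Then $e$ is not a loop of $(G\setminus R)/T$, by the previous paragraph. I would reuse the $u$-outdegree invariant of Corollary~\ref{cor:path}, where $\ra{e}=(u,v)$ in $(G\setminus R)/T$ with $u\neq v$.
\begin{itemize}
\item Cycle reversals preserve this outdegree.
\item Cocycle reversals act only on acyclic edges, and reversing a directed cocycle can change the outdegree of $u$. This is the obstacle.
\end{itemize}
To handle it, I would instead use the invariant restricted to cyclic 1-way edges: the outdegree of $u$ counting only cyclic 1-way edges, computed within the strongly connected component containing $u$ and $v$. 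Since every class in $\gamma$ is $e$-cyclic, $u$ and $v$ lie in a common strong component of the contracted digraph. Cycle reversals preserve the cyclic/acyclic status of edges and the outdegree counted inside the component. Cocycle reversals touch only acyclic edges, which are not counted. Moving along an edge of $\gamma$ flips $e$ from $\la{e}$ to $\ra{e}$ or back, changing the count by $\pm1$ monotonically along the path, so $\gamma$ cannot close up.

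Dually, if all classes are $e$-acyclic, I would use the $C$-flow of Corollary~\ref{cor:path-ii} along a cycle $C$ of $G'$ through $e$. Such a $C$ exists since $e$ is not an isthmus. I would count only acyclic 1-way edges of $C$, and argue that a directed cocycle meets $C$ in a balanced way. This needs care: the cocycle should be taken within the acyclic part, and cycle reversals must leave the acyclic edges fixed.

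The main obstacle is making these restricted invariants well defined and verifying their invariance. In particular, the cocycle case requires that the cyclic/acyclic status of each edge is constant along the whole component $\gamma$. This holds when all classes share the same $e$-status, because flipping $e$ then preserves the status of every edge.
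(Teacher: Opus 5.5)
Your proof is correct. Its skeleton is the paper's: Lemma~\ref{lem:degree-iii} gives the path/cycle dichotomy, loops are treated separately, and then a $u$-outdegree invariant rules out cycles made only of $e$-cyclic classes while a $C$-flow invariant rules out cycles made only of $e$-acyclic classes.

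\textbf{Where you differ, and why it matters.} You restrict the invariants to cyclic, resp.\ acyclic, 1-way edges, and this is not cosmetic. The paper asserts two things that fail under cycle-cocycle equivalence:
\begin{itemize}
\item The plain $u$-outdegree of $\phi/T$ is constant on $e$-cyclic classes. Counterexample: take $A=B=\emptyset$, two parallel edges $e,f$ between $u,v$ forming a directed $2$-cycle, and a pendant edge $g=\{u,w\}$ directed out of $u$. Reversing the cocycle $\{g\}$ stays inside one $e$-cyclic class but lowers the outdegree of $u$.
\item The plain $C$-flow is constant on $e$-acyclic classes. A cycle reversal using edges of $C$ can change it.
\end{itemize}
Counting only cyclic (resp.\ acyclic) 1-way edges repairs this. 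The repair rests on three facts you correctly identify:
\begin{itemize}
\item Reversing directed cycles and $(A,B)$-cocycles preserves the cyclic/acyclic status of every edge.
\item Cycle reversals touch only cyclic edges, and cocycle reversals touch only acyclic ones.
\item Flipping $e$ between two fourientations in which $e$ has the same status leaves the status of every other edge unchanged. In the cyclic case, $u$ and $v$ lie in one strong component of the digraph without $e$; in the acyclic case, neither reaches the other there.
\end{itemize}
For the converse of the loop characterization, your use of Lemma~\ref{lem:reverse-once} is also more explicit than the paper's ``easy to see''.

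\textbf{Two points to make explicit when writing it up.}
\begin{itemize}
\item Fix once and for all the digraph in which ``cyclic'' is measured, for example $\vG(A,B;\phi)$ (contracted by $T$). Use it both for the $e$-cyclic/$e$-acyclic dichotomy of classes and for the counts.
\item Spell out the monotonicity. By Lemma~\ref{lem:degree-iii}, a degree-2 class on a cycle is entered through one of its blocks and left through the other. Hence, going around the cycle in a fixed direction, the restricted count changes by the same amount $\pm 1$ at every step, so the cycle cannot close up.
\end{itemize}
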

\begin{proof}
Consider a connected component $\gamma$ of the graph $\mathcal{G}_S(A,B)$, and let $\mU\subseteq \mV$ be the set of equivalence classes which are the vertices of $\gamma$. Every fourientation of every class in $\mU$ has the same set $R\subseteq S$ of 0-way edges and the same set $T\subseteq S$ of 2-way edges.

If $\{e\}\cup T$ contains a cycle of $G$, then it is easy to see that the component $\gamma$ is made of a unique ($e$-cyclic) class incident to a loop. 
If $\{e\}\cup R$ contains a $(A,B)$-cocycle of $G$, then the component $\gamma$ is made of a unique ($e$-acyclic) class incident to a loop. 

We now suppose that $\{e\}\cup T$ does not contain a cycle of $G$ and $\{e\}\cup R$ does not contain a $(A,B)$-cocycle of $G$. By reasoning as in the proof of Lemma~\ref{lem:degree} (and considering the $u$-outdegree, where $u$ is an endpoint of $e$ in $G/T$) one can show that $\gamma$ is not a cycle of $e$-cyclic classes (because the $u$-outdegree is constant over $e$-cyclic classes, but changes monotonously when walking along a path of $e$-cyclic classes in $\gamma$). By reasoning as in the proof of Lemma~\ref{lem:degree-ii} (and considering the $C$-flow, where $C$ is a cycle $G\setminus R$) one can show that $\gamma$ is not a cycle of $e$-acyclic classes (because the $C$-flow is constant over $e$-acyclic classes, but changes monotonously when walking along a path of $e$-acyclic classes in $\gamma$). This completes the proof.
\end{proof}

We wish to prove $|\mV| = |\mV'|$. Again, this will be done by describing how the equivalence classes change when the 1-way arc $\ra{e}$ or $\la{e}$ is replaced by $\ba{e}$ or $\ua{e}$. By reasoning similarly as before we get the following result.
\begin{lemma}\label{lem:block-association-iii}
Each $\ba{e}$-class (resp. $\ua{e}$-class) in $\mV'$ is associated to a set of $\mV$-blocks $\{B_1,\ldots,B_n\}$, where for all $i\in [n-1]$ the $\mV$-blocks $B_{i},B_{i+1}$ are either in the same $e$-cyclic (resp. $e$-acyclic) class or are $e$-adjacent.
\end{lemma}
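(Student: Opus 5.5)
We mimic the proof of Lemma~\ref{lem:block-association}, the only new point being that in the cycle-cocycle setting both $\ba{e}$-classes and $\ua{e}$-classes can merge several blocks. Fix a $\ba{e}$-class $C\in\mV'$ and two $\mV$-blocks $B,B'$ associated to $C$, with $\phi\in B$ and $\phi'\in B'$. Let $\psi,\psi'$ be obtained from $\phi,\phi'$ by giving $e$ the configuration $\ba{e}$. Since $\psi,\psi'\in C$, there is a sequence $\psi_1=\psi,\psi_2,\ldots,\psi_k=\psi'$ of fourientations, each obtained from the previous one by reversing a directed cycle or a directed $(A,B)$-cocycle. By Lemma~\ref{lem:ABequiv} all the $\psi_i$ are $(A,B)$-valid, with $e$ in configuration $\ba{e}$. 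Our goal is to lift this sequence to a sequence $\phi_1=\phi,\ldots,\phi_\ell=\phi'$ of $(A,B)$-valid fourientations with $e$ 1-way. Consecutive terms should be in the same $\mV$-block, in the two blocks of the same class, or in $e$-adjacent blocks.

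\textbf{Lifting a single step.} Let $\psi_{i+1}$ be obtained from $\psi_i$ by reversing $X$, and suppose we have $\phi_i$, which is $\psi_i$ with $e$ replaced by some 1-way arc $c$, and which is $(A,B)$-valid and associated to $C$.

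If $X$ is an $(A,B)$-cocycle, then $X$ does not contain the 2-way edge $e$. Since $\ba{e}$ crosses the cut only if the edge lies inside one side, $X$ is also a directed $(A,B)$-cocycle of $\phi_i$: the arc $c$ lies within a side, and no arc crosses backwards. Reversing $X$ gives a fourientation $\phi_{i+1}$ in the same $\mV$-class as $\phi_i$ and with the same orientation of $e$, hence in the same block.

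If $X$ is a directed cycle of $\psi_i$ not using $e$, it is a directed cycle of $\phi_i$, and we conclude in the same way.

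If $X$ uses $e$ in some direction $d$, there are two subcases. When $d=c$, $X$ is a directed cycle of $\phi_i$ and we reverse it; this moves to the other block of the same class, which is then $e$-cyclic. When $d=-c$, we first replace $\phi_i$ by the fourientation with $e$ oriented as $d$. This fourientation is $e$-adjacent to $\phi_i$ provided it is $(A,B)$-valid, and it then contains $X$ as a directed cycle, so we can reverse $X$.

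\textbf{Validity of the flipped fourientation.} The delicate point is the case $d=-c$: we must check that flipping $e$ preserves $(A,B)$-validity. Here we use that $X$ yields a directed path in $\phi_i$ between the endpoints of $e$ in the direction of $d$ (traverse $X$ minus $e$, with 2-way edges traversable). So in $D(c)$ the arc $c$ is cyclic, and the two endpoints of $e$ are strongly connected. Hence $D(c)$, $D(-c)$ and $D(\ba{e})$ all have the same reachability relation, and validity transfers. The same argument applies to the final element, matching $\phi'$ up to its orientation of $e$, which is either equal or reachable through one $e$-adjacency step.

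\textbf{The $\ua{e}$-case.} This is dual. A directed cycle of $\psi_i$ never uses the 0-way edge $e$, so it lifts directly and stays in the same block. A directed $(A,B)$-cocycle $X$ of $\psi_i$ may contain $e$ in the dual sense of crossing the cut. If $c$ is oriented along the cut, we reverse $X$ in $\phi_i$, which moves to the other block of the same $e$-acyclic class. Otherwise we first flip $e$, which is an $e$-adjacent step. Validity after the flip holds because $e$ is acyclic in both orientations, since the cut separates its endpoints with no backward arc other than $e$. So $D(-c)$ has the same reachability as $D(\ua{e})$ for the pairs in $A\cup B$, by the same argument as in Lemma~\ref{lem:upgrade}.

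We expect the main obstacle to be this validity check in the flipped cases, together with ensuring that every intermediate lift belongs to a class of $\mV$, that is, contains no all-2-way cycle and no all-0-way $(A,B)$-cocycle. That condition is inherited from $C\in\mV'$, because removing the solid status of $e$ only destroys such configurations.
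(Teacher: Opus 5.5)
Your approach is the paper's. The paper only says the lemma follows by reasoning as in Lemma~\ref{lem:block-association}, i.e.\ by lifting a chain of reversals from $\psi$ to $\psi'$ to a chain of fourientations in which $e$ is 1-way. Most of your bookkeeping is correct:
\begin{itemize}
\item a directed $(A,B)$-cocycle cannot cross the 2-way edge $e$;
\item a directed cycle cannot use the 0-way edge $e$;
\item the absence of all-2-way cycles and all-0-way $(A,B)$-cocycles is inherited from $C$;
\item the last lift differs from $\phi'$ at most at $e$.
\end{itemize}
However, the step you flag as delicate, validity of the flipped fourientation, is argued with the two orientations of $e$ swapped in both cases. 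The intermediate claims you state are false.

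In the $\ba{e}$-case, suppose $X$ traverses $e$ as $d=-c$. Then $X$ minus $e$ is a path from the head of $d$ to its tail, i.e.\ from the tail of $c$ to the head of $c$. It runs parallel to $c$ and does not close a cycle with it. For a concrete failure, take two parallel edges $e,f$ joining $x,y$, with $f$ oriented from $x$ to $y$, $c=(x,y)$, and $X$ made of $f$ followed by $e$ traversed from $y$ to $x$. In $D(c)$ the vertex $y$ cannot reach $x$. So $c$ need not be cyclic in $D(c)$, and $D(c)$, $D(-c)$, $D(\ba{e})$ need not have the same reachability. (Strong connectivity of the endpoints in $D(c)$ would say nothing about $D(-c)$ anyway.) What is true is that $X$ minus $e$, together with $-c$, is a directed cycle of $D(-c)$. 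Hence adding $c$ to $D(-c)$ changes no reachability, so $D(-c)$ and $D(\ba{e})$ have the same reachability relation. Validity therefore transfers from $\psi_i$ (valid by Lemma~\ref{lem:ABequiv}), not from $\phi_i$.

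The $\ua{e}$-case has the same problem. The arc $c$ pointing from $V_2$ to $V_1$ can be cyclic in $D(c)$: take $X=\{(x,y)\}$ with $x\in V_1$, $y\in V_2$, and $c=(y,x)$. So ``$e$ is acyclic in both orientations'' is false. What holds is that after the flip the cut $(V_1,V_2)$ still has no backward arc, so $-c$ is acyclic in $D(-c)$. Hence $D(-c)$ and $D(\ua{e})$ have the same directed cycles, and validity again transfers from $\psi_i$ by Remark~\ref{rk:valid}. With these two repairs your lifting argument is complete.
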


\fig{width = \textwidth}{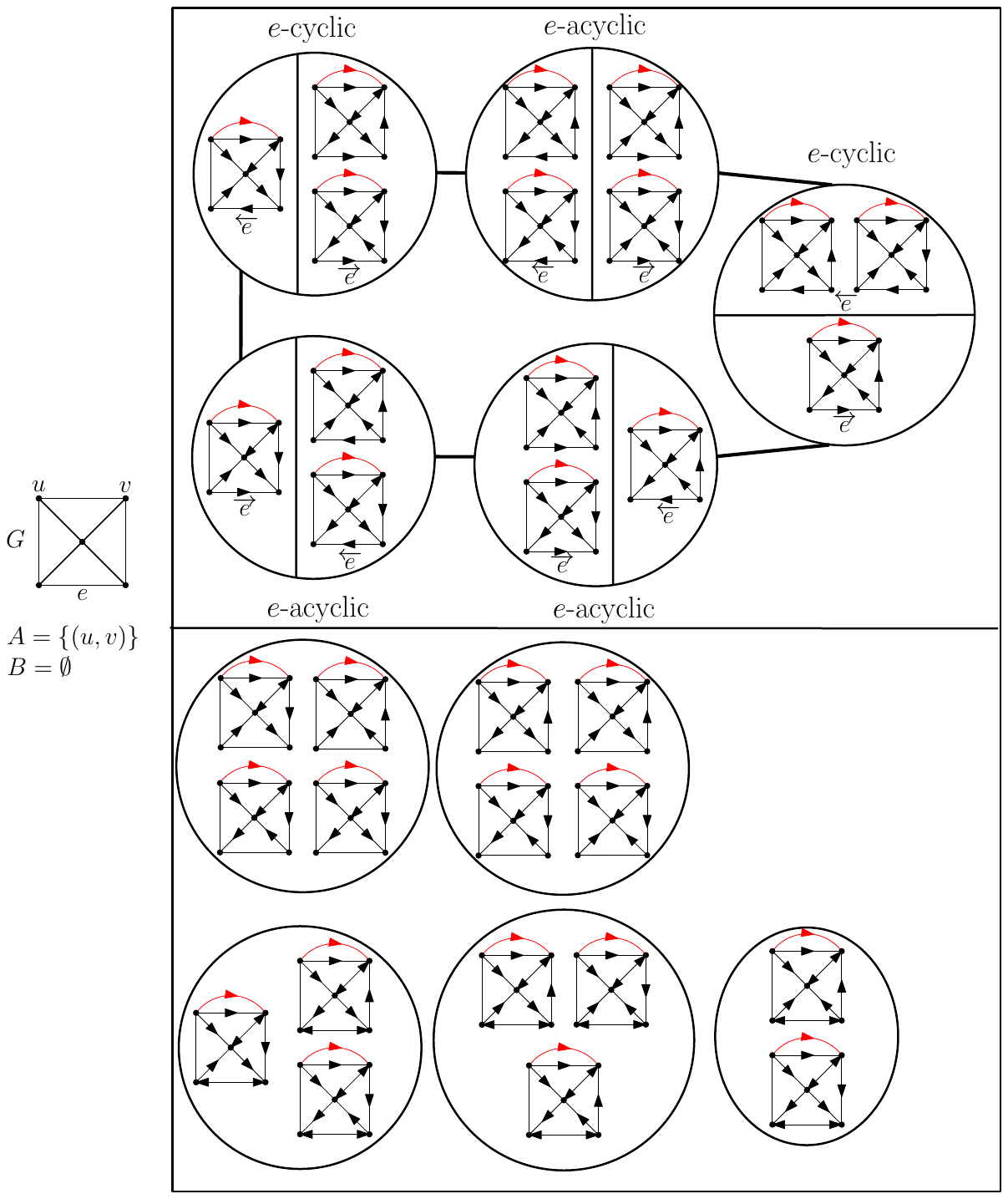}{Correspondence between cycle-cocycle equivalence classes in $\mV$ and $\mV'$. The acyclic constraint $(u,v)\in A$ is drawn in red. Top: A connected component $\gamma$ of the graph $\mathcal{G}_S''(A,B)$ of cycle-cocycle equivalence classes. This connected component of $\ga$ is a cycle with $a=2$ $e$-cyclic classes and $b=3$ $e$-acyclic classes. Bottom: The resulting cycle-cocycle classes in $\mV'$. As claimed, there are $a=2$ resulting  $\ua{e}$-classes, $b=3$ resulting $\ba{e}$-classes. }

By combining Equation \eqref{eq:suffice2}, Corollary~\ref{cor:path-iii} and Lemma~\ref{lem:block-association-iii}, one can show that each connected component $\gamma$ of $\mG_S'(A,B)$ with $k$ classes of $\mV$ gives raise to $k$ classes in $\mV'$. This is done by considering separately several cases.
 
The case where $\gamma$ is a cycle of length 1 (a vertex incident to a loop) must be treated separately. Indeed, by Corollary~\ref{cor:path-iii} this is the case where the edge $e$ is the only 1-way edge in either a directed cycle or a directed $(A,B)$-cocycle of the fourientations in $\ga$. In this case there is exactly one choice of $\ba{e}$ or $\ua{e}$ which does not result in a cycle of 2-way edges or a $(A,B)$-cocycle of 0-way edges, hence the blocks in $\gamma$ are associated to a single class in $\mV'$.
 
In all the other cases, by \eqref{eq:suffice2}, the blocks in $\ga$ are associated to two classes in $\mV'$ if they are $e$-adjacent to another $\mV$-block in $\ga$, and associated to a single class in $\mV'$ otherwise. We consider several cases.
\begin{compactitem}
\item Suppose that $\gamma$ is a cycle with $k\geq 2$ vertices. See Figure~\ref{fig:cycle-cocycle-merge2} for an illustration of this case. 
By Corollary~\ref{cor:path-iii}, there are both $e$-cyclic and $e$-acyclic classes in $\ga$.
Moreover, as explained above, each $\mV$-block in $\gamma$ is associated to both a $\ba{e}$-class and a $\ua{e}$-class in $\mV'$.
It is easy to see from Lemma~\ref{lem:block-association-iii} that the $\ba{e}$-classes (resp. $\ua{e}$-classes) associated to the blocks in $\gamma$ are in one-to-one correspondence with the ``subpaths of $\gamma$'' obtained from the cycle $\gamma$ by ``cutting in two'' the $e$-acyclic (resp. $e$-cyclic) classes (that is, separating the two blocks of these classes). 
Thus, if $\gamma$ has $a$ $e$-acyclic classes and $b$ $e$-cyclic classes, then the blocks in $\gamma$ are associated with $a$ $\ba{e}$-classes and $b$ $\ua{e}$-classes in $\mV'$. This gives a total of $a+b=k$ classes in $\mV'$ associated with the $\mV$-blocks in $\ga$. 
\item Suppose now that $\gamma$ is a path with $k\geq 2$ vertices. Each $\mV$-block in $\ga$ which is adjacent to an edge of $\gamma$ is associated to both a $\ba{e}$-class and a $\ua{e}$-class in $\mV'$. 
It is easy to see from Lemma~\ref{lem:block-association-iii} that the $\ba{e}$-classes (resp. $\ua{e}$-classes) associated to the blocks in $\gamma$ are in one-to-one correspondence with the ``subpaths of $\gamma$'' obtained from the path $\gamma$ by cutting in two the $e$-acyclic (resp. $e$-cyclic) classes of degree 2 in $\gamma$ (separating their two blocks). 
Thus, if $\gamma$ has $a$ $e$-acyclic classes of degree 2 and $b$ $e$-cyclic classes of degree 2, then the blocks in $\gamma$ are associated with $a+1$ $\ba{e}$-classes and $b+1$ $\ua{e}$-classes in $\mV'$. This gives a total of $a+b+2=k$ classes in $\mV'$ associated with the $\mV$-blocks in $\ga$.
\item Lastly, if the path $\gamma$ has a single vertex, then the $\mV$-blocks in $\gamma$ are associated to a single class in $\mV'$ (precisely, a $\ba{e}$-class if the class in $\gamma$ is $e$-cyclic and a $\ua{e}$-class otherwise, by Lemma~\ref{lem:upgrade}). 
\end{compactitem}
In every case, the number of classes in $\mV'$ associated to the blocks in $\gamma$ is equal to the number of vertices of $\gamma$.
This proves $|\mV|=|\mV'|$, hence Theorem~\ref{thm:eqClass}(iii).

\bigskip

\noindent \textbf{Acknowledgments:} We are extremely grateful to Ira Gessel who, by asking us whether it was possible to interpret~\eqref{eq:Ira} combinatorially, led us on the path to discovering the results in the present article. We also thank Neha Goregaokar, Emeric Gioan, Alex Leighton and Vasiliy Nekrasov for stimulating discussions.

\bibliographystyle{abbrv}
\bibliography{biblio-orient-subgraphs}
	
\end{document}